\date{}
\newtheorem{thm}{Theorem}[section]
\newtheorem{lem}[thm]{Lemma}
\newtheorem{prop}[thm]{Proposition}
\newtheorem{cor}[thm]{Corollary}
\theoremstyle{definition}
\newtheorem{ex}[thm]{\it Example}
\newtheorem{rem}[thm]{\it Remark}
\numberwithin{equation}{section}
\title{The numerical duplication of a numerical semigroup}
\begin{document}
\newcommand{\Hi}{\mathrm {Hilb}}
\newcommand{\card}{\mathrm {card}}
\newcommand{\ap}{\mathrm {Ap}}
\newcommand{\ord}{\mathrm {ord}}
\newcommand{\mapM}{\mathrm {maxAp_M}}
\newcommand{\map}{\mathrm {maxAp}}
\newcommand{\gr}{\mathrm{gr}}
\author{M. D'Anna\thanks{{\em email}: mdanna@dmi.unict.it} \and
F. Strazzanti \thanks{{\em email}:
francesco.strazzanti@gmail.com}}

\maketitle

\begin{abstract}
\noindent In this paper we present and study the numerical
duplication of a numerical semigroup, a construction that,
starting with a numerical semigroup $S$ and a semigroup ideal
$E\subseteq S$, produces a new numerical semigroup, denoted by
$S\!\Join^b\!\!E$ (where $b$ is any odd integer belonging to $S$),
such that $S=(S\!\Join^b\!\!E)/2$. In particular, we characterize
the ideals $E$ such that $S\!\Join^b\!\!E$ is almost symmetric and
we determine its type.
\medskip

\noindent MSC: 20M14; 13H10.
\end{abstract}

\section*{Introduction}
In this paper we present and study a construction that, starting
with a numerical semigroup $S$ and a semigroup ideal $E\subseteq
S$, produces a new numerical semigroup, denoted $S\!\Join^b\!\!E$
(where $b$ is any odd integer belonging to $S$), such that $S=\{x
\in \mathbb N : \ 2x \in S\!\Join^b\!\!E \}$ (briefly
$S=(S\!\Join^b\!\!E)/2$). We call this new semigroup the {\it
numerical duplication} of $S$ with respect to $E$ and $b$.

The origin of this construction comes from ring theory; more
precisely, in \cite{DR}, the authors, looking for a unified
approach for Nagata's idealization (see e.g. Nagata's book
\cite[p.2]{Na} and \cite{fgr}) and amalgamated duplication (see
e.g \cite{D} and \cite{DF}), define a family of rings obtained as
quotients of the Rees algebra associated to a commutative ring $R$
and an ideal $I \subseteq R$; moreover, starting from an algebroid
branch $R$ and a proper ideal $I$, it is possible to obtain a
member of this family that is again an algebroid branch. It turns
out that its value semigroup is the numerical duplication of the
value semigroup $v(R)$ with respect to $v(I)$, for some odd $b \in
v(R)$.

Moreover, some homological properties, such as Gorensteinnes and
Cohen-Macaulay type, coincide for the idealization, the
duplication and for any member of the family cited above,
depending only by $R$ and $I$ (these facts are proved in the work
in progress \cite{DS}). Hence, it is natural to look at the
analogous properties for numerical semigroups. For example, in
this paper we show that if $E$ is a canonical ideal of $S$, then
$S\!\Join^b\!\!E$ is symmetric and, more generally, we determine
the type of $S\!\Join^b\!\!E$. Like in the ring case, these
properties of the numerical duplication depend only by $S$ and
$E$, but are independent by the integer $b$.

In particular, we are interested in studying when our construction
produces almost symmetric numerical semigroups. This class of
semigroups was introduced and studied by Barucci and Fr\"oberg in \cite{BF}, as the
nume\-rical analogue of the notion of almost Gorenstein rings for
one-dimensional ring theory. Almost symmetric numerical semigroups
generalize the notions of symmetric and pseudo-symmetric
semigroups (which are exactly almost symmetric numerical
semigroups of type $1$ and $2$, respectively) and revealed to be
interesting from many point of view (see e.g. \cite{B} and
\cite{N}). In \cite{GMP}, Goto, Matsuoka and Phuong studied, in particular, when
Nagata's idealization produces an almost Gorenstein ring; the
second author of the present paper studied the same problem for
the duplication in his master thesis (see \cite{S}). In this paper
we characterize those ideals $E$ of $S$ for which
$S\!\Join^b\!\!E$ is almost symmetric, obtaining a numerical
analogue of the results of \cite{GMP}.

Numerical duplication is also connected to other results in
numerical semigroup theory, regarding the notion of one half of a
numerical semigroup. In \cite{RGSGU}, Rosales, Garc\'ia-S\'anchez,
Garc\'ia-Garc\'ia and Urbano-Blanco, given a numerical semigroup
$T$ and an integer $n$, define $T/n=\{x \in \mathbb N: \ nx \in
T\}$, in order to solve proportionally modular diophantine
inequalities. Successively, in \cite{RGS2}, Rosales and
Garc\'ia-S\'anchez proved that every numerical semigroup is one
half of infinitely many symmetric numerical semigroups (and this
result has been generalized by Swanson in \cite{Sw}, for every $n
\geq 2$). As a byproduct of our characterization, we obtain that
every numerical semigroup $S$, is one half of infinitely many
almost symmetric semigroups with type $x$, where $x$ is any odd
integer not bigger than $2t(S)+1$ (here $t(S)$ denotes the type of
$S$). The fact that we get an odd integer as the type of an almost
symmetric duplication is not surprising, if we consider that
Rosales, in \cite{R}, proves that a numerical semigroup is one
half of a pseudo-symmetric numerical semigroup (i.e. an
almost-symmetric numerical semigroup of type two) if and only if
it is irreducible.

Finally, we notice that numerical duplication has an application
in the context of Weierstrass semigroups: using the fact that we
can control the genus of the numerical duplication and applying a
result of Torres in \cite{T}, the relation $S=(S\!\Join^b\!\!E)/2$
implies that, starting from a numerical semigroup that cannot be
realized as a Weierstrass semigroup, we can construct other
numerical semigroups that cannot be realized as Weierstrass
semigroups (see Remark \ref{Weierstrass}).

\medskip
The structure of the paper is the following: in the first section
we recall all the basic notions on numerical semigroups, that we
will use in the rest of the paper, and we prove some preliminary
lemmas about relative ideals. In Section 2, we define the
numerical duplication, we compute its Frobenius number, its genus
and we show that it is symmetric if and only if the ideal $E$ is a
canonical ideal (see Proposition \ref{Re}); moreover, in
Proposition \ref{type}, we compute the type of the duplication in
terms of $S$ and $E$.

In Section 3, we characterize when the numerical duplication is
almost symmetric (see Theorem \ref{main}) and, if this is the
case, we get new formulas to compute its type (see Proposition
\ref{type2}). Moreover, we show that, for any odd integer $x$
between $1$ and $2t(S)+1$, it is possible to obtain infinitely
many almost symmetric
numerical duplications with type $x$ (for every odd $b \in S$) and,
as a corollary, we get that every numerical semigroup $S$ is one
half of infinitely many almost symmetric semigroups with
prescribed odd type (not bigger than $2t(S)+1$).

\section{Preliminaries}

Most of the definitions and results that we recall in this section
can be found in \cite{FGH} or in \cite{RG}; if not, we will give
specific references. A {\it numerical semigroup} $S$ (briefly
n.s.) is a submonoid of $\mathbb N$, such that $|\mathbb N
\setminus S| < \infty$. The elements of $\mathbb N \setminus S$
are called gaps of $S$ and their cardinality is called the {\it
genus} of the semigroup, denoted by $g(S)$. The largest gap is
called {\it Frobenius number} of $S$ and it is denoted by $f(S)$.
The set $\{f(S)+1+x : \ x \in \mathbb N\}$ is called the {\it
conductor} of $S$ and it is denoted by $C(S)$ (the name conductor
is due to the equality $C(S)=\{z \in \mathbb Z: \ z+n \in S, \
\forall \ n \in \mathbb N\}$).

Let $M(S):=S\setminus\{0\}$ and $M(S)-M(S)=\{z \in \mathbb Z : \
z+s \in M(S), \ \forall \ s \in M(S) \}$; it is well known that
$M(S)-M(S)=S-M(S):=\{z \in \mathbb Z : \ z+s \in S, \ \forall \ s
\in M(S) \}$ and it is a n.s. containing both $S$ and $f(S)$. The
elements of $(M(S)-M(S))\setminus S$ are called {\it
pseudo-Frobenius numbers} of $S$, the cardinality
$|(M(S)-M(S))\setminus S|$ is said {\it type} of $S$ and it is
denoted by $t(S)$.

\medskip
It is clear that, if $s \in S$, then $f(S)-s \notin S$. Using this
remark, the gaps of $S$ are usually classified as gaps of the
first type, i.e. non-negative integers $x \notin S$ of the form
$x=f(S)-s$ with $s \in S$, and gaps of the second type, i.e.
non-negative integers $y \notin S$, such that $f(S)-y \notin S$.
By definition, all pseudo-Frobenius numbers except $f(S)$ are gaps
of the second type; if we denote by $L(S)$ the set of gaps of the
second type, we can express this property saying that $L(S)\cup
\{f(S)\} \supseteq (M(S)-M(S))\setminus S$.

A n.s. $S$ is said to be {\it symmetric} if and only if $\forall \
z \in \mathbb Z$,
$$
z \in S \ \Longleftrightarrow \ f(S)-z \notin S.
$$
It is well known that $S$ is symmetric if and only if $f(S)+1=2g(S)$;
another equivalent condition for $S$ to be symmetric is $t(S)=1$.
For symmetric semigroups all the non-negative integers not
belonging to $M(S)-M(S)$ are gaps of the first type; we can
rephrase this property saying that $L(S)\cup \{f(S)\} =
(M(S)-M(S))\setminus S$ (notice that, for symmetric semigroup
$L(S)=\emptyset$).

The last property mentioned above can be used (see \cite{BF}) to
define a new class of semigroups: $S$ is said to be {\it almost
symmetric} if $L(S)\cup \{f(S)\} = (M(S)-M(S))\setminus S$.
Symmetric semigroups are exactly the almost symmetric semigroups
of type one. Almost symmetric semigroups of type two are called
{\it pseudo-symmetric}.

\medskip

A set $E \subseteq \mathbb Z$ is said to be a {\it relative ideal}
of $S$ if $S+E\subseteq E$ (i.e. $s+t\in E$, for every $s \in S$
and $t \in E$) and there exists $s \in S$ such that $s+E=\{s+t: \
t \in E \} \subseteq S$. Relative ideals contained in $S$ are
simply called {\it ideals}. $S$, $M(S)$, $C(S)$ and $M(S)-M(S)$
are clearly relative ideals of $S$; in particular, $M(S)$ is
called {\it maximal ideal} of $S$. We can operate with relative
ideals obtaining again relative ideals: if $E$ and $F$ are
relative ideals of $S$, then $E+F=\{t+u: \ t \in E, \ u \in F\}$
and $E-F=\{z \in \mathbb Z: \ z+u \in E, \ \forall \ u \in F \}$
are both relative ideals of $S$. By $nE$ we will denote $E+E+\dots
+E$, $n$-times.

If $E$ is a relative ideal of $S$, we define $m(E)=\min E$, $f(E)=
\max (\mathbb Z \setminus E)$ (it is well defined since
$m(E)+C(S)\subseteq E$) and $g(E)=|(\mathbb Z \setminus E)\cap
\{m(E),m(E)+1, \dots, f(E)\}|$. One can always shift a relative
ideal $E$ adding to it an integer $x$: $x+E=\{x+e:\ e \in E\}$. It
is obvious that the relation $E \sim E' \Leftrightarrow E'=x+E$
(for some $x \in \mathbb Z$) is an equivalence relation. In any
equivalence class there is exactly one representative $\widetilde
E$ such that $f(\widetilde E)=f(S)$; it is obtained by any ideal
$E$ in the following way: $\widetilde E=E+f(S)-f(E)$. For all
ideals $E$ we have that $C(S) \subseteq \widetilde E \subseteq
\mathbb N$ (the second inclusion follows by the inclusion
$m(\widetilde E)+C(S) \subseteq \widetilde E$, that implies
$f(S)=f(\widetilde E) \leq m(\widetilde{E})+f(S)$, i.e. $0\leq
m(\widetilde E)$).

\medskip
There exists a distinguished class of relative ideals of $S$: the
class of canonical ideals. Following \cite{J}, we say that a
relative ideal $K'$ is a canonical ideal of $S$ if $K'-(K'-E)=E$,
for every relative ideal $E$ of $S$. In particular, $K(S)=\{x \in
\mathbb Z: \ f(S)-x \notin S \}$ is a canonical ideal (\cite[Satz
4]{J}) and we will call it {\it standard canonical ideal};
moreover, for any canonical ideal $K'$, we have $K'\sim K(S)$.
Notice that $f(K(S))=f(S)$.

It is straightforward to see that $S \subseteq K(S) \subseteq
\mathbb N$ and that $S$ is symmetric if and only if $K(S)=S$;
moreover, the following property holds.

\begin{lem}\label{1}(\cite[Hilfssatz 5]{J}) For any relative ideal $E$, $K(S)-E=\{x \in \mathbb
Z: \ f(S)-x \notin E\}$.
\end{lem}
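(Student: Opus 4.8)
The plan is to unwind both sides of the claimed equality $K(S)-E=\{x \in \mathbb{Z} : f(S)-x \notin E\}$ directly from the definitions and from the description of the standard canonical ideal $K(S)=\{y \in \mathbb{Z} : f(S)-y \notin S\}$. So I would fix $x \in \mathbb{Z}$ and try to show that $x \in K(S)-E$ if and only if $f(S)-x \notin E$. By definition of the colon (ideal quotient), $x \in K(S)-E$ means $x+e \in K(S)$ for every $e \in E$, which by the description of $K(S)$ means $f(S)-(x+e) \notin S$ for every $e \in E$; equivalently, $\bigl(f(S)-x\bigr)-e \notin S$ for every $e \in E$.

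The remaining task is therefore to prove the equivalence: $(f(S)-x)-e \notin S$ for all $e \in E$ $\iff$ $f(S)-x \notin E$. Writing $z = f(S)-x$ to lighten notation, this is the statement that $z \notin E \iff z - e \notin S$ for all $e \in E$, i.e. $z \in E \iff z-e \in S$ for some $e \in E$. The forward direction here is immediate: if $z \in E$, we may take $e=z$, and $z-e = 0 \in S$. For the reverse direction, suppose $z - e_0 \in S$ for some $e_0 \in E$; since $E$ is a relative ideal of $S$ we have $S + E \subseteq E$, hence $(z-e_0)+e_0 = z \in E$. This closes the loop.

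I would then assemble these pieces: $x \in K(S)-E \iff z-e \notin S \ \forall e\in E \iff z \notin E \iff f(S)-x \notin E$, where the middle equivalence is exactly the contrapositive of the two short implications just established. One small point worth checking for cleanliness is that $E$ being a relative ideal (rather than an ideal contained in $S$) causes no trouble — but the only property used is $S+E\subseteq E$, which holds for all relative ideals, so there is no issue; in particular $0 \in S$ guarantees $e = 0\cdot\text{(trivially)} \in$ nothing problematic, and the use of $0 \in S$ in the forward direction is legitimate.

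Honestly, there is no real obstacle here: the statement is essentially a definitional manipulation, and the only ``content'' is the one-line use of $S+E \subseteq E$ for the reverse implication. The main thing to be careful about is bookkeeping with the signs and the substitution $z = f(S)-x$, and making sure the quantifiers (``for every $e \in E$'' versus ``there exists $e \in E$'') are negated correctly when passing between $x \in K(S)-E$ and $f(S)-x \notin E$. Since this is cited from \cite[Hilfssatz 5]{J}, I would keep the argument short and self-contained rather than belabor it.
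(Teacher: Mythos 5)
Your argument is correct: unwinding $x\in K(S)-E$ to ``$(f(S)-x)-e\notin S$ for all $e\in E$'' and then proving $z\notin E \iff z-e\notin S\ \forall e\in E$ via the two one-line implications (taking $e=z$ with $0\in S$ for one direction, and $S+E\subseteq E$ for the other) is exactly the natural definitional proof, and the quantifier bookkeeping is handled properly. The paper itself gives no proof --- it only cites J\"ager's Hilfssatz 5 --- so there is nothing to compare against; your self-contained version is fine as it stands (the only blemish is the garbled aside about ``$e=0\cdot$(trivially)'', which you should simply delete).
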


Using the previous lemma other duality properties can be easily
proved:
\begin{itemize}
  \item $E \subseteq F  \Leftrightarrow K(S)-F \subseteq
  K(S)-E$ and $E=F \Leftrightarrow K(S)-F =
  K(S)-E$;
  \item if $E \subseteq F$, then $|F \setminus E|=|(K(S)-E)\setminus (K(S)-F)|$;
  \item $K(S)-K(S)=S$.
\end{itemize}
Moreover, by \cite[Proposition 12]{RB} it follows that $K(S)$ is
generated, as relative ideal of $S$, by $t(S)$ elements and these
generators are exactly of the form $f(S)-x$, where $x \in
(M(S)-M(S)) \setminus S$.

The standard canonical ideal $K(S)$ can be used to give an
equivalent condition for $S$ to be almost symmetric (see
\cite[Proposition 4]{BF}):
$$S \ \text{almost symmetric} \ \Leftrightarrow \ K(S)+M(S)\subseteq M(S);$$
we will use this characterization in the third section.

\medskip
In this paper we will need to consider, in particular, the
relative ideal \ $K(S)-(M(S)-M(S))=\{x \in \mathbb Z: \ f(S)-x
\notin M(S)-M(S)\}$ (cf. Lemma \ref{1}). \ Since  \ $S
\subseteq M(S)-M(S) \subseteq \mathbb N$, \ by duality properties
we obtain \ $C(S)=K(S)-\mathbb N \subseteq K(S)-(M(S)-M(S))
\subseteq K(S)$.

Notice also that \ $K(S)+M(S) \subseteq K(S)-(M(S)-M(S))$; \ in
fact, if $x =y+s$ (with $y\in K(S)$ and $s\in M(S)$) and $z \in
M(S)-M(S)$, we get $x+z=y+(s+z) \in K(S)+M(S) \subseteq K(S)$.

%

\medskip
We will need also to use the following facts.

\begin{lem}\label{K-biggest} Let $E$ be a relative ideal of $S$; then $\widetilde
E=E+f(S)-f(E)$ is such that $C(S) \subseteq \widetilde E \subseteq
K(S)$.
\end{lem}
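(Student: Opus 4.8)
The plan is to derive both inclusions solely from the two properties recorded just before the statement: that $\widetilde E$ is a relative ideal of $S$, and that $f(\widetilde E)=f(S)$ by construction. This is exactly the argument already sketched in the preliminaries for ideals $E\subseteq S$, now run for an arbitrary relative ideal.

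For the inclusion $C(S)\subseteq\widetilde E$: the conductor is $C(S)=\{f(S)+1+x:\ x\in\mathbb N\}$, i.e. the set of all integers strictly greater than $f(S)$. Since $f(S)=f(\widetilde E)=\max(\mathbb Z\setminus\widetilde E)$, every such integer lies in $\widetilde E$, so $C(S)\subseteq\widetilde E$. (Combined with $K(S)\subseteq\mathbb N$ this also re-proves $\widetilde E\subseteq\mathbb N$, so there is no conflict with the ``for all ideals'' remark above.)

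For the inclusion $\widetilde E\subseteq K(S)$: by definition $K(S)=\{x\in\mathbb Z:\ f(S)-x\notin S\}$, so I must check that $x\in\widetilde E$ implies $f(S)-x\notin S$. Suppose instead that $f(S)-x\in S$. Then, using the relative ideal property $S+\widetilde E\subseteq\widetilde E$, we get $f(S)=(f(S)-x)+x\in S+\widetilde E\subseteq\widetilde E$, contradicting $f(S)=f(\widetilde E)\notin\widetilde E$. Hence $f(S)-x\notin S$, i.e. $x\in K(S)$, and therefore $\widetilde E\subseteq K(S)$.

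I do not expect any genuine obstacle: the whole content is the normalization $f(\widetilde E)=f(S)$ together with closure of $\widetilde E$ under translation by $S$. The only minor point to watch is that $x$, and hence $f(S)-x$, may be negative; but then $f(S)-x\notin\mathbb N\supseteq S$ trivially, so $x\in K(S)$ anyway, and in fact the displayed contradiction argument is insensitive to signs.
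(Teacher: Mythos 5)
Your proof is correct and follows essentially the same route as the paper: the first inclusion is immediate from the normalization $f(\widetilde E)=f(S)$, and the second is exactly the paper's contradiction argument, producing $f(S)=(f(S)-x)+x\in S+\widetilde E\subseteq\widetilde E$ against $f(\widetilde E)=f(S)\notin\widetilde E$. Nothing further is needed.
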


\begin{proof}
The first inclusion is obvious. The second inclusion follows by
definitions of relative ideal and of $K(S)$: if $x \in \widetilde
E \setminus K(S)$, then $f(S)-x \in S$, that implies
$(f(S)-x)+x=f(S) \in \widetilde E$; contradiction.
\end{proof}

\begin{lem}\label{cardinality} Let $E$ be a relative  ideal of a n.s. $S$. Then
$f(S)+1-g(S)\leq g(\widetilde E)+m(\widetilde E)$ and equality
holds if and only if $E$ is a canonical ideal (i.e. $\widetilde
E=K(S)$).
\end{lem}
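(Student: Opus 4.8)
The plan is to reduce everything to the representative $\widetilde E$, for which $f(\widetilde E)=f(S)$, and then compare $\widetilde E$ with the standard canonical ideal $K(S)$ via the counting/duality machinery already set up. First I would record that, since $f(\widetilde E)=f(S)$, the gaps of $\widetilde E$ lying below the conductor are exactly the elements of $(\mathbb N\setminus \widetilde E)\cap\{0,1,\dots,f(S)\}$ together with nothing above $f(S)$; thus $g(\widetilde E)=\big|\{0,1,\dots,f(S)\}\setminus\widetilde E\big|$. Splitting $\{0,1,\dots,f(S)\}$ into the $m(\widetilde E)$ integers $0,\dots,m(\widetilde E)-1$ (all of which are gaps of $\widetilde E$, by definition of $m(\widetilde E)$ and the inclusion $\widetilde E\subseteq\mathbb N$ from Lemma~\ref{K-biggest}) and the remaining $f(S)+1-m(\widetilde E)$ integers $m(\widetilde E),\dots,f(S)$, I get
$$
g(\widetilde E)+m(\widetilde E)=f(S)+1-\big|\{m(\widetilde E),\dots,f(S)\}\cap\widetilde E\big|\,,
$$
so proving the inequality $f(S)+1-g(S)\le g(\widetilde E)+m(\widetilde E)$ is equivalent to showing $\big|\{m(\widetilde E),\dots,f(S)\}\cap\widetilde E\big|\le g(S)$.

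The key step is to identify $\{m(\widetilde E),\dots,f(S)\}\cap\widetilde E$ — or rather its cardinality — with a set of gaps of $S$ of the same size, via the duality $E\subseteq F\Rightarrow |F\setminus E|=|(K(S)-E)\setminus(K(S)-F)|$ listed after Lemma~\ref{1}. Since $C(S)\subseteq\widetilde E\subseteq K(S)$ by Lemma~\ref{K-biggest}, I apply this with $E:=\widetilde E$ and $F:=K(S)$, giving $|K(S)\setminus\widetilde E|=|(K(S)-\widetilde E)\setminus(K(S)-K(S))|=|(K(S)-\widetilde E)\setminus S|$, using $K(S)-K(S)=S$. On the other hand $K(S)\setminus\widetilde E$ consists precisely of the integers in $\{m(\widetilde E),\dots,f(S)\}$ that lie in $K(S)$ but not in $\widetilde E$; combined with the fact that all of $\{m(\widetilde E),\dots,f(S)\}\setminus K(S)$ are gaps of $S$ as well (if $x\le f(S)$ and $x\notin K(S)$ then $f(S)-x\in S$, and one checks this forces $x\notin S$), I can bound $\big|\{m(\widetilde E),\dots,f(S)\}\cap\widetilde E\big|=\big|\{m(\widetilde E),\dots,f(S)\}\big|-|K(S)\setminus\widetilde E|-\big|\{m(\widetilde E),\dots,f(S)\}\setminus K(S)\big|$ and rearrange; the point is that $K(S)-\widetilde E\subseteq K(S)-C(S)$... actually the cleanest route is: $|K(S)\setminus\widetilde E|=|(K(S)-\widetilde E)\setminus S|\le |\,\mathbb N\setminus S\,|=g(S)$ once one knows $C(S)=K(S)-\mathbb N\subseteq K(S)-\widetilde E\subseteq\mathbb N$, so $(K(S)-\widetilde E)\setminus S$ is a set of gaps of $S$. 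Feeding this back yields $g(\widetilde E)+m(\widetilde E)\ge f(S)+1-g(S)$ after accounting for the gaps of $S$ below $m(\widetilde E)$ as well; I would organize the bookkeeping so that the two families of gaps of $S$ (those I get from $K(S)\setminus\widetilde E$ and those I get from the complement of $K(S)$ below $f(S)$) are disjoint and their union sits inside $\mathbb N\setminus S$.

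For the equality case, I track when each inequality above is tight. Equality forces $(K(S)-\widetilde E)\setminus S=\mathbb N\setminus S$, i.e. $K(S)-\widetilde E=\mathbb N$ (as both lie between $C(S)$ and $\mathbb N$ and agree off $S$, and one checks they must then agree on $S$ too), equivalently $\widetilde E=K(S)-\mathbb N=\ldots$ wait — better: $K(S)-\widetilde E=\mathbb N$ gives, applying $K(S)-(\,\cdot\,)$ and the fact that $\widetilde E$ lies in the class, $\widetilde E=K(S)-(K(S)-\widetilde E)=K(S)-\mathbb N$? No: I should instead use that $K(S)-\widetilde E$ has $f$-value... Since $\widetilde E$ is squeezed between $C(S)$ and $K(S)$, the duality $|K(S)\setminus\widetilde E|=|(K(S)-\widetilde E)\setminus S|$ is tight against $g(S)$ exactly when $K(S)\setminus\widetilde E$ exhausts the "gap budget", i.e. when $\widetilde E=K(S)$; conversely if $\widetilde E=K(S)$ then $m(\widetilde E)=0$ and $g(\widetilde E)=g(K(S))=g(S)$ (since $f(K(S))=f(S)$ and $|\mathbb N\setminus K(S)|=|(K(S)-K(S)... $, more simply $|\mathbb N\setminus K(S)|=g(S)$ because $K(S)$ and $S$ have the same gaps below $f(S)$ by the symmetric-difference duality), so $g(\widetilde E)+m(\widetilde E)=g(S)=f(S)+1-g(S)$ precisely when $S$ is... no, the claimed equality is $f(S)+1-g(S)=g(\widetilde E)+m(\widetilde E)$ with no hypothesis on $S$, so I just need $g(K(S))=f(S)+1-g(S)$, which is immediate: the gaps of $K(S)$ in $\{0,\dots,f(S)\}$ are the complement of its non-gaps, and duality gives $|K(S)\cap\{0,\dots,f(S)\}|=|S\cap\{0,\dots,f(S)\}|+$ something — cleanest is $|\{0,\dots,f(S)\}\cap K(S)|= f(S)+1-g(S)$... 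I would verify this last identity directly from $K(S)=\{x:f(S)-x\notin S\}$, which pairs $x\mapsto f(S)-x$ bijectively between $\{0,\dots,f(S)\}\cap K(S)$ and $\{0,\dots,f(S)\}\setminus S$, so $g(K(S))=|\{0,\dots,f(S)\}\setminus K(S)|=f(S)+1-(f(S)+1-g(S))=g(S)$; hmm that gives $g(K(S))=g(S)$, and then $g(K(S))+m(K(S))=g(S)+0$, so the asserted equality reads $f(S)+1-g(S)=g(S)$, i.e. $S$ symmetric — which is wrong in general, so the statement must intend a different reading and I will re-derive the correct normalization before writing the final argument. The main obstacle is thus getting the index bookkeeping exactly right; the duality tools themselves do all the real work.
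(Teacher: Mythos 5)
Your reduction of the left side is fine: with $f(\widetilde E)=f(S)$ and $C(S)\subseteq\widetilde E\subseteq\mathbb N$ one indeed has $g(\widetilde E)+m(\widetilde E)=|\mathbb N\setminus\widetilde E|=f(S)+1-|\widetilde E\cap\{0,\dots,f(S)\}|$, so the lemma amounts to $|\widetilde E\cap\{0,\dots,f(S)\}|\le g(S)$. But the route you then settle on bounds the wrong quantity: $|K(S)\setminus\widetilde E|=|(K(S)-\widetilde E)\setminus S|\le g(S)$ is true yet irrelevant, because what is needed is the identity $|K(S)\cap\{0,\dots,f(S)\}|=g(S)$ (equivalently $|\mathbb N\setminus K(S)|=f(S)+1-g(S)$), after which the inequality is just the inclusion $\widetilde E\subseteq K(S)$ from Lemma~\ref{K-biggest}. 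Your fallback bookkeeping --- two \emph{disjoint} families of gaps of $S$, one of size $|K(S)\setminus\widetilde E|$ coming from $(K(S)-\widetilde E)\setminus S$ and one equal to $\{0,\dots,f(S)\}\setminus K(S)$, with union inside $\mathbb N\setminus S$ --- is false in general: take $\widetilde E=C(S)$, where $(K(S)-\widetilde E)\setminus S=\mathbb N\setminus S$ already exhausts all gaps while $\{0,\dots,f(S)\}\setminus K(S)$ is a nonempty subset of it, and the would-be conclusion $|K(S)\setminus\widetilde E|+|\{0,\dots,f(S)\}\setminus K(S)|\le g(S)$ fails (the left side equals $f(S)+1>g(S)$).

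The equality case is not established in either direction, and the final computation contains a complement slip that leads you to doubt a correct statement: your own pairing $x\mapsto f(S)-x$ shows $|K(S)\cap\{0,\dots,f(S)\}|=g(S)$, hence $g(K(S))=f(S)+1-g(S)$ (not $g(S)$), so for $\widetilde E=K(S)$ one gets $g(K(S))+m(K(S))=f(S)+1-g(S)+0$, which is exactly the asserted equality, with no symmetry hypothesis on $S$. The paper's argument is the one you almost reach and then abandon: $g(\widetilde E)+m(\widetilde E)=|\mathbb N\setminus\widetilde E|$, and $s\mapsto f(S)-s$ injects the $f(S)+1-g(S)$ elements of $S\cap\{0,\dots,f(S)\}$ into $\mathbb N\setminus K(S)\subseteq\mathbb N\setminus\widetilde E$ (since $s\in S$ forces $f(S)-s\notin K(S)$); this gives the inequality, and since $\mathbb N\setminus K(S)\subseteq\mathbb N\setminus\widetilde E$ are finite sets, equality of cardinalities forces $\widetilde E=K(S)$, i.e.\ $E$ canonical. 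As written, your proposal has a genuine gap in the inequality and no proof of the equality case.
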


\begin{proof}
The integer $g(\widetilde E)+m(\widetilde E)$ is the number of
elements in $\mathbb N \setminus \widetilde E$. On the other hand
$f(S)+1-g(S)$ is the number of elements $s$ in $S$, smaller than
$f(S)+1$. Since $s \in S$ implies $f(S)-s \notin K(S) \supseteq
\widetilde E$, the thesis follows by the definition of $K(S)$.
\end{proof}

\section{The numerical duplication of a numerical semigroup}

In this section we define the numerical duplication and we study
some of its basic properties. We fix the notations that we will
use in the rest of the paper: with $S$ we denote a n.s., with $M$,
$C$ and $K$ its maximal ideal, its conductor and its standard
canonical ideal, respectively; we set $f=f(S)$ and $g=g(S)$. Let
$E\subseteq S$ be an ideal of $S$; we define $e=f(E)-f$ and
$\widetilde E=E-e$ (so $f(\widetilde E)=f(K)=f$).

We also set $2\cdot S:=\{2s: \ s \in S \}$ and $2\cdot E:=\{2t: \
t \in E \}$ (notice $2\cdot S\neq 2S=S+S$ and $2\cdot E \neq
2E=E+E$). Let $b \in S$ be an odd integer. Then we define the {\it
numerical duplication}, $S\!\Join^b\!\!E$, of $S$ with respect to
$E$ and $b$ as the following subset of $\mathbb N$:
$$
S\!\Join^b\!\!E=2\cdot S \cup (2\cdot E+b).
$$
It is straightforward to check that $S\!\Join^b\!\!E$ is a
numerical semigroup. In fact $0=2\cdot 0 \in S\!\Join^b\!\!E$;
since $f(E) \geq f$, every integer $n>2f(E)+b$ belongs to
$S\!\Join^b\!\!E$; finally, the conditions $b\in S$ and $E$ ideal
of $S$ immediately imply that $S\!\Join^b\!\! E$ is closed with
respect to the sum.

\medskip Notice that, more generally, the previous construction
produces a n.s. only assuming that $E$ is a relative ideal of $S$,
that $b \in S$ is an odd integer and that $b+E+E \subseteq S$ (the
last condition is fulfilled, e.g., if $b$ is big enough), but for
our aims it is simpler to use the hypotheses assumed at the
beginning of this section.

\begin{prop}\label{Re} The following properties hold for $S\!\Join^b\!\!E$:
\begin{description}
  \item[(1)] $f(S\!\Join^b\!\!E)=2f(E)+b$;
  \item[(2)] $g(S\!\Join^b\!\!E)= g+g(E)+m(E)+\frac{b-1}{2}$;
  \item[(3)] $S\!\Join^b\!\!E$ is symmetric if and only if $E$ is a
  canonical ideal of $S$.
\end{description}
\end{prop}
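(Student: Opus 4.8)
The plan is to analyse $S\!\Join^b\!\!E=2\cdot S\cup(2\cdot E+b)$ by parity. Since $b$ is odd, the even elements of $S\!\Join^b\!\!E$ are exactly those of $2\cdot S$ and the odd ones are exactly those of $2\cdot E+b$; equivalently, an even $n\in\mathbb N$ lies in $S\!\Join^b\!\!E$ iff $n/2\in S$, and an odd $n$ lies in it iff $(n-b)/2\in E$ (the latter being an integer automatically). Thus I can handle even integers through $S$ and odd integers through $E$ independently, and all three statements reduce to bookkeeping about $S$ and about the ideal $E$ (note $E\subseteq S\subseteq\mathbb N$, so $\mathbb Z\setminus S\subseteq\mathbb Z\setminus E$ and hence $f(E)\ge f$).

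For \textbf{(1)}: the largest even non-element of $S\!\Join^b\!\!E$ is $2f$, and the largest odd non-element is $2f(E)+b$, because $f(E)=\max(\mathbb Z\setminus E)$ and, as $E\subseteq\mathbb N$, every odd $n>2f(E)+b$ has $(n-b)/2>f(E)$, hence $(n-b)/2\in E$. Since $f(E)\ge f$ gives $2f(E)+b>2f$, we conclude $f(S\!\Join^b\!\!E)=2f(E)+b$. For \textbf{(2)}: the even gaps of $S\!\Join^b\!\!E$ are $\{2x:x\in\mathbb N\setminus S\}$, so there are exactly $g$ of them. The odd gaps are the odd $n$ with $(n-b)/2\notin E$; writing $n=2j+b$ puts these in bijection with the integers $j\ge(1-b)/2$ satisfying $j\notin E$. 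As $E\subseteq\mathbb N$, all integers in the range $[(1-b)/2,-1]$ are counted, giving $\tfrac{b-1}{2}$ of them, and among the non-negative integers exactly $m(E)+g(E)=|\mathbb N\setminus E|$ fail to lie in $E$. Summing the three contributions yields $g(S\!\Join^b\!\!E)=g+g(E)+m(E)+\tfrac{b-1}{2}$.

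For \textbf{(3)}: by the criterion recalled in Section~1, $S\!\Join^b\!\!E$ is symmetric iff $f(S\!\Join^b\!\!E)+1=2g(S\!\Join^b\!\!E)$. Substituting the formulas from (1) and (2) and cancelling, this is equivalent to $f(E)+1=g+g(E)+m(E)$. Now I rewrite everything in terms of $\widetilde E=E-e$ with $e=f(E)-f$: we have $f(E)=f+e$, $m(E)=m(\widetilde E)+e$, and $g(E)=g(\widetilde E)$ since translation preserves the number of gaps in the relevant window. The equality then becomes $f(S)+1-g(S)=g(\widetilde E)+m(\widetilde E)$, which is precisely the case of equality in Lemma~\ref{cardinality}, and that holds if and only if $\widetilde E=K$, i.e. $E$ is a canonical ideal of $S$.

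I expect the only delicate point to be the count of odd gaps in (2), where one must carefully separate the $\tfrac{b-1}{2}$ "negative indices" $j$ (all automatically outside $E$) from the non-negative ones, and recognise that the latter number equals $m(E)+g(E)$; one should also keep in mind the degenerate cases (e.g. $S=\mathbb N$, or $E=S$), for which the conventions on $f(E)$ and $m(E)$ still make the formulas hold. Once (1) and (2) are established, part (3) is immediate from Lemma~\ref{cardinality}, so no further obstacle arises there.
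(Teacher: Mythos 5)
Your proposal is correct and follows essentially the same route as the paper: parity decomposition of $S\!\Join^b\!\!E$, a direct count of even and odd gaps for (1) and (2) (your split into negative and non-negative indices $j$ is just a repackaging of the paper's count of odd integers below $2m(E)+b$ and of gaps of $E$ in the window $[m(E),f(E)]$), and for (3) the reduction via the symmetric criterion $f+1=2g$ and the translation to $\widetilde E$ so as to invoke Lemma~\ref{cardinality}.
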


\begin{proof}
As we observed above, property (1) follows by $f(E) \geq f$. As
for property (2), we first note that even gaps of
$S\!\Join^b\!\!E$ correspond bijectively to the gaps of $S$;
secondly, every odd integer smaller than $2m(E)+b$ is not in
$S\!\Join^b\!\!E$; finally, if $x \notin S\!\Join^b\!\!E$ is an
odd integer such that $2m(E)+b \leq x \leq 2f(E)+b$, then
$x=2y+b$, with $y \notin E$ and $m(E)\leq y \leq f(E)$.

As for point (3), $S\!\Join^b\!\!E$ is symmetric if and only if
$f(S\!\Join^b\!\!E)+1=2(g(S\!\Join^b\!\!E))$. Using (1) and (2) we
get that $S\!\Join^b\!\!E$ is symmetric if and only if
$$\aligned
2&f(E)+b+1=2g+2g(E)+2m(E)+b-1 \\
&\Longleftrightarrow\ 2(f(E)+1)=2(g+g(E)+m(E)). \endaligned
$$
With the notation fixed at the beginning of this section, we have
$f(E)=f+e$, $m(E)=m(\widetilde E)+e$ and $g(E)=g(\widetilde E)$; therefore the last
equality divided by $2$, i.e. $f(E)+1=g+g(E)+m(E)$ is equivalent
to $f+1=g+g(\widetilde E)+m(\widetilde E)$ or, equivalently, to
$f+1-g=g(\widetilde E)+m(\widetilde E)$. Hence, by Lemma
\ref{cardinality}, $S$ is symmetric if and only if $\widetilde
E=K$, that is $E$ is a canonical ideal of $S$.
\end{proof}

\begin{rem}
Point (3) of the previous proposition was observed yet in
\cite{DR} (and we publish it with the permission of the second
author) and gives, in particular, an alternative proof of the
results of \cite{RGS} and \cite{RGS2}, where it is stated that
every n.s. is one half of a (respectively, infinitely many)
symmetric n.s.; moreover, the construction given in \cite{RGS2}
essentially coincides with this construction, since $K$ is
generated, as relative ideal, by the elements $f(S)-x$, where $x$
varies in the set of the pseudo-Frobenius numbers of $S$.
\end{rem}

\begin{rem}\label{Weierstrass}
In \cite{T} has been defined the notion of $\gamma$-{\it
hyperelliptic} n.s., that is a n.s. with $\gamma$ even gaps, in
the context of Weierstrass semigroups theory; in particular, this
notion and its generalizations have been used to find semigroups
that cannot be realized as Weierstrass semigroups (see also
\cite{T2}). It is clear that the numerical duplication produces
$g$-hyperelliptic semigroups. If we try to apply directly the
criterion given in \cite[Remark 3.4]{T}, we cannot decide if a
numerical duplication is Weierstrass or not. On the other hand,
arguing as in \cite[Corollary 3.3]{T}, if we start with a n.s. $S$
which cannot be realized as a Weierstrass semigroup and if we
choose $E$ and $b$ such that $g(S\!\Join^b\!\!E)>6g+4$, we can
conclude that also $S\!\Join^b\!\!E$ cannot be realized as a
Weierstrass semigroup.
\end{rem}

\begin{rem} Let $n \in \mathbb N$, $n \geq 2$ and let $b\in S$,
relatively prime to $n$. Then, setting $n\cdot E=\{nt:\ t \in
E\}$, we can generalize the numerical duplication as follows:
$$S\!\Join^b_n\!\!E=n\cdot S \cup (n\cdot
E+b) \cup (n\cdot(2E)+2b) \cup \dots \cup (n\cdot((n-1)E)+(n-1)b).
$$
We have that $S\!\Join^b_n\!\!E$ is a n.s., with the property that
$S$ is one over $n$ of it. We call $S\!\Join^b_n\!\!E$ the {\it
numerical $n$-tuplication} of $S$ with respect to $E$ and $b$. The
fact that $S\!\Join^b_n\!\!E$ is a n.s. follows by a case by case
computation, using the property $hE+kE=(h+k)E$, for any $h,k \in
\mathbb N$ (where $0E=S$, by definition).
\end{rem}

In the next section we will need to consider the standard
canonical ideal of the numerical duplication: $K(S\!\Join^b\!\!E)=
\{z \in \mathbb Z : \ 2f(E)+b-z \notin S\!\Join^b\!\!E \}$. We
have that $a=2f(E)+b-z \notin S\!\Join^b\!\!E$ if and only if
either $a$ is even and $\frac{a}{2} \notin S$ or $a$ is odd and
$\frac{a-b}{2} \notin E$. Hence we obtain:
$$
z \in K(S\!\Join^b\!\!E) \ \Longleftrightarrow \ z=2f(E)+b-a \ \
\text{with} \ \
  \begin{cases}
    \frac{a}{2} \notin S,  & a \ \text{even}, \\
    \frac{a-b}{2} \notin E, & a \ \text{odd}.
  \end{cases}
$$

We now study the type of the numerical duplication $S\!\Join^b\!\!
E$.

\begin{prop}\label{type}
Let $S$ be a n.s., let $M$ be its maximal ideal, let $E\subseteq
S$ be an ideal of $S$ and let $b\in S$ be an odd integer. Then the
type of $S\!\Join^b\!\! E$ is
$$
t(S\!\Join^b\!\! E)=|((M-M)\cap (E-E))\setminus S|+|(E-M)\setminus
E|
$$
In particular, $t(S\!\Join^b\!\! E)$ does not depend on $b$.
\end{prop}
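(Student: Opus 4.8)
The plan is to compute the pseudo-Frobenius numbers of $T := S\!\Join^b\!\!E$ directly from the description of $M(T)-M(T)$, splitting into the even and odd parts. Since $T = 2\cdot S \cup (2\cdot E + b)$ and $b$ is odd, an integer $z \in \mathbb{Z}$ lies in $M(T)-M(T)$ iff $z + w \in M(T)$ for all $w \in M(T)$; testing against the even nonzero elements $2s$ ($s \in M$) and the odd elements $2t+b$ ($t \in E$) separately. First I would observe that $z$ even, say $z = 2u$, forces (by adding $2s$, $s\in M$) $u \in M-M$, and (by adding $2t+b$, $t\in E$) $u+t \in E$ for all $t\in E$, i.e. $u \in E-E$; conversely these two conditions suffice. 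So the even part of $M(T)-M(T)$ is $2\cdot\big((M-M)\cap(E-E)\big)$. Similarly $z$ odd, say $z = 2v + b$, forces (adding $2s$, $s\in M$) $v+s \in E$ for all $s \in M$, i.e. $v \in E-M$, and (adding $2t+b$, $t\in E$, using $2b$ even and $b\in S$ so $2\cdot(v+t)+2b = 2\cdot(v+t+b)$ with $v+t+b \in \mathbb{Z}$) the condition $v+t+b \in S$ for all $t\in E$, i.e. $v + b \in S-E$. Since $b\in S\subseteq S-E$ is automatic? — no, I must be careful here: I would instead note $E \subseteq S$, so $S-E \supseteq S-S = S \ni b$ only tells us $b\in S-E$, which is not quite what is needed; the needed condition is $v+b+t\in S$ for all $t\in E$, and since $E\subseteq S$ and $b\in S$, whenever $v\in E-M$ one checks $v+b+t$ lands in $S$ using $t\in E\subseteq S$ together with... this is the delicate point. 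So the odd part of $M(T)-M(T)$ should come out to $\{2v+b : v \in E-M\}$, once the redundancy of the second odd condition is verified.

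Granting that, $M(T)-M(T) = 2\cdot\big((M-M)\cap(E-E)\big) \,\cup\, \big(2\cdot(E-M)+b\big)$, the two pieces being disjoint by parity. To get the type I then intersect the complement with $T$: the pseudo-Frobenius numbers are $(M(T)-M(T))\setminus T$. For the even piece, $2u \notin T$ iff $u \notin S$ (parity again), so the even pseudo-Frobenius numbers biject with $\big((M-M)\cap(E-E)\big)\setminus S$, giving the first term $|((M-M)\cap(E-E))\setminus S|$. For the odd piece, $2v+b \notin T$ iff $v \notin E$, so the odd pseudo-Frobenius numbers biject with $(E-M)\setminus E$, giving the second term $|(E-M)\setminus E|$. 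Adding, $t(T) = |((M-M)\cap(E-E))\setminus S| + |(E-M)\setminus E|$, and since neither quantity involves $b$, independence from $b$ is immediate.

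The main obstacle I anticipate is the bookkeeping on the odd part of $M(T)-M(T)$: showing that the single condition $v \in E-M$ already captures membership, i.e. that the constraint coming from adding odd elements $2t+b$ (which produces $v+t+b$ and demands it lie in $S$) is automatically implied. The key facts to deploy are $b\in S$, $E\subseteq S$, and the hypothesis that $E$ is an \emph{ideal} of $S$ (so $S + E \subseteq E$), together with $E-M \subseteq E-M(S) $ being a relative ideal; one should get $v\in E-M$ and $t\in E$ imply $v+t \in$ something inside $S$ after adding $b$. A secondary point worth stating carefully is that $M(T)-M(T)$ is computed against $M(T)$ but equals $T - M(T)$ (as recalled in Section 1 for any numerical semigroup), so there is no discrepancy between the ``$-M$'' and the pseudo-Frobenius description. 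Apart from that, every step is a parity argument plus the definitions of $E-E$, $M-M$, $E-M$, so once the odd-part lemma is nailed down the rest is routine.
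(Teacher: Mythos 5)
Your plan is structurally the same as the paper's proof: split $M(T)-M(T)$ (where $T=S\!\Join^b\!\!E$) by parity, identify the even part with $2\cdot\big((M-M)\cap(E-E)\big)$, identify the odd part with $2\cdot(E-M)+b$, and then intersect with the complement of $T$ (again by parity) to count the pseudo-Frobenius numbers, which immediately gives the two summands and the independence of $b$. The even case and the final counting are indeed routine. But the one point that is not routine is exactly the one you leave open: for an odd integer $2v+b$, membership in $M(T)-M(T)$ imposes \emph{two} conditions, $v\in E-M$ (from adding the even elements $2s$, $s\in M$) and $v\in M-(b+E)$ (from adding the odd elements $2t+b$, $t\in E$), and the stated formula is only correct because the second condition is implied by the first. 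You name the relevant hypotheses ($b\in S$, $E\subseteq S$, $E$ an ideal) but explicitly trail off (``this is the delicate point'', ``once the odd-part lemma is nailed down the rest is routine''), so as written the proposal proves only that the odd pseudo-Frobenius numbers inject into $(E-M)\setminus E$, i.e.\ an inequality $\le$ rather than the equality. Since this implication is the sole substantive step of the whole proof, its absence is a genuine gap.

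The gap is small, and the paper closes it with a short case split on whether $0\in E$. Since $b$ is odd and belongs to $S$, $b\neq 0$, so $b\in M$; hence $v\in E-M$ gives $v+b\in E$. If $E\subseteq M$, then for every $t\in E$ one has $v+b+t\in E+E\subseteq M$, i.e.\ $v\in M-(b+E)$. If instead $0\in E$, then $E=S$ (an ideal of $S$ contained in $S$ and containing $0$ is $S$ itself), so $v\in S-M=M-M$, whence $v+b\in M$ and $v+b+t\in M+S=M$ for all $t\in E=S$. Inserting this argument at the flagged spot makes your proof essentially identical to the paper's.
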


\begin{proof} Let $T=S\!\Join^b\!\!E$.
Let $x=2h$ be an even integer not belonging to $T$, i.e. $h \notin
S$; $x \in M(T)-M(T)$ if and only if $2h+2s \in M(T)$, for every
$s \in M$, and $2h+2t+b \in M(T)$, for every $t \in E$. These two
conditions are equivalent to $h \in M-M$ and $h\in E-E$,
respectively; hence we get the first summand of the formula in the
statement.

Let now $x=2h+b$ be an odd integer not belonging to $T$, i.e. $h
\notin E$; $x \in M(T)-M(T)$ if and only if $2h+b+2s \in M(T)$,
for every $s \in M$, and $2h+b+2t+b \in M(T)$, for every $t \in
E$. In this case the two conditions are equivalent to $h \in E-M$
and $h \in M-(b+E)$, respectively. Assume that $E \subseteq M$;
since $b \in M$, the condition $h \in E-M$ implies $h+b+E
\subseteq E+E$; but $E \subseteq M$, so $E+E \subseteq M$ and thus
$h \in M-(b+E)$. Hence we get $E-M \subseteq M-(b+E)$, that
implies the second summand of the formula in the statement. It
remains to prove the thesis in the case $0\in E$, that is $E=S$: the first condition
becomes  $h \in S-M=M-M$; hence $h+b+S \subseteq M+S=M$ and again
we have $S-M \subseteq M-(b+S)$, that implies the thesis.
\end{proof}

\begin{ex}\label{extype}
Let $S=\{0,5, \rightarrow\}$ and $E=\{5,8,10, \rightarrow\}$; let
$b=5$. The numerical duplication is
$S\!\Join^b\!\!E=\{0,10,12,14,15,16,18,20,21,22,24,\rightarrow\}$,
which has Frobenius number equal to $23=18+5=2f(E)+b$ and genus
equal to $14=4+3+5+2=g+g(E)+m(E)+\frac{b-1}{2}$. As for the type,
we have $(M-M)\cap (E-E)=\mathbb N \cap \{0,3,5,\rightarrow \}$
and $E-M=\{5,\rightarrow\}$. Hence $t(S\!\Join^b\!\!
E)=|[(M-M)\cap (E-E)]\setminus S|+|(E-M)\setminus E|=1+3=4$, as
can be checked directly, computing the pseudo-Frobenius numbers of
the numerical duplication: $\{6,17,19,23\}$.
\end{ex}

\section{Almost symmetric duplications}

We start this section giving the proof of the main result of the
paper. We recall that we use the notations introduced at the
beginning of Section 2. We will need some lemmas.

%
%

\begin{lem}\label{2}
Let $E$ be an ideal of a n.s. $S$ and let $\widetilde E=E-e$.
Assume that $K-(M-M) \subseteq \widetilde E$. Then, for any $x
\notin E$, $f(E)-x \in M-M$.
\end{lem}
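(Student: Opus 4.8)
The plan is to rewrite the conclusion in terms of the shifted ideal $\widetilde E=E-e$ (recall $e=f(E)-f$, so that $f(\widetilde E)=f$) and then read it off from Lemma~\ref{1} together with the hypothesis. First I would record the two elementary translations dictated by the definitions: since $\widetilde E=E-e$, an integer $x$ lies outside $E$ exactly when $x-e$ lies outside $\widetilde E$; and since $f(E)=f+e$, we have $f(E)-x=f-(x-e)$. Hence, setting $y=x-e$, the assertion ``$f(E)-x\in M-M$ for every $x\notin E$'' is literally the assertion ``$f-y\in M-M$ for every $y\notin\widetilde E$''.

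Next I would apply Lemma~\ref{1} to the relative ideal $M-M$ of $S$ (which is indeed a relative ideal of $S$, being a numerical semigroup that contains $S$ and $f$): this gives $K-(M-M)=\{z\in\mathbb Z:\ f-z\notin M-M\}$, so that $f-y\notin M-M$ is equivalent to $y\in K-(M-M)$. Now if $y\notin\widetilde E$, then by the hypothesis $K-(M-M)\subseteq\widetilde E$ we get $y\notin K-(M-M)$, whence $f-y\in M-M$. Unwinding the substitution, for $x\notin E$ and $y=x-e$ we conclude $f(E)-x=f-y\in M-M$, which is the claim.

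There is essentially no obstacle here: the argument is a two-line deduction from Lemma~\ref{1}. The only things to keep straight are the shift by $e$ (so that $x\notin E\Leftrightarrow x-e\notin\widetilde E$ and $f(E)-x=f-(x-e)$) and the fact that Lemma~\ref{1} must be invoked for the relative ideal $M-M$ rather than for $E$ itself, since it is precisely the identity $K-(M-M)=\{z:\ f-z\notin M-M\}$ that powers the proof.
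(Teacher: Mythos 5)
Your proof is correct, and it is a slightly more streamlined variant of the paper's argument. Both proofs start the same way: from $x\notin E$ you pass to $x-e\notin\widetilde E$ and use the hypothesis $K-(M-M)\subseteq\widetilde E$ to conclude $x-e\notin K-(M-M)$. The difference is in how that non-membership is converted into $f(E)-x\in M-M$. You invoke Lemma~\ref{1} for the relative ideal $M-M$, i.e.\ the identity $K-(M-M)=\{z\in\mathbb Z:\ f-z\notin M-M\}$ (which the paper itself records in the Preliminaries), so the conclusion $f-(x-e)=f(E)-x\in M-M$ drops out in one line; you correctly note that this requires $M-M$ to be a relative ideal of $S$, which it is. The paper instead unwinds the ideal-quotient definition of $K-(M-M)$: from $x-e\notin K-(M-M)$ it extracts a witness $y\in M-M$ with $x-e+y\notin K$, reads off $f(E)-x-y\in S$ from the definition of $K$, and then uses $S+(M-M)\subseteq M-M$ to conclude. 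The two routes are logically equivalent (your application of Lemma~\ref{1} to $M-M$ is exactly what the paper's witness argument re-derives on the spot), so your version buys brevity at no cost in rigor.
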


\begin{proof}
Since $x \notin E$, we have $x-e \notin \widetilde E \supseteq
K-(M-M)$. Hence there exists $y \in M-M$, such that $x-e+y \notin
K$, i.e. $f+e-x-y =f(E)-x-y \in S$. Since $y \in M-M$, which is a
relative ideal of $S$, $f(E)-x=(f(E)-x-y)+y \in S+(M-M)=M-M$.
\end{proof}

\begin{lem}\label{3}
Let $E$ be an ideal of a n.s. $S$ and let $\widetilde E=E-e$.
Assume that $K-(M-M) \subseteq \widetilde E$ and that
$K-\widetilde E$ is a numerical semigroup. Then, for any $x \notin
E$, $f(E)-x \in E-E$.
\end{lem}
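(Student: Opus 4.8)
The plan is to mimic the structure of the proof of Lemma~\ref{2}, but now exploiting the stronger hypothesis that $K-\widetilde E$ is itself a numerical semigroup (not merely a relative ideal). Fix $x\notin E$, so that $x-e\notin\widetilde E$. By Lemma~\ref{1} applied to the relative ideal $\widetilde E$, the condition $x-e\notin\widetilde E$ is equivalent to $f-(x-e)\in K-\widetilde E$, i.e. $f(E)-x=f+e-x\in K-\widetilde E$. So the whole content is to show that $K-\widetilde E\subseteq E-E$.

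For this I would argue as follows. Since $\widetilde E$ is a relative ideal containing $K-(M-M)\supseteq C(S)$, and $\widetilde E\subseteq K$ by construction (its Frobenius number is $f$), duality gives $S=K-K\subseteq K-\widetilde E\subseteq K-(K-(M-M))$. Now $K$ is a canonical ideal, so $K-(K-(M-M))=M-M$; hence $S\subseteq K-\widetilde E\subseteq M-M$. In particular $K-\widetilde E$ is a numerical semigroup sandwiched between $S$ and $M-M=S-M$. The key point is then: for a numerical semigroup $T$ with $S\subseteq T$, and any relative ideal $\widetilde E$ of $S$, one has $T\subseteq \widetilde E-\widetilde E$ provided $T+\widetilde E\subseteq\widetilde E$. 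Indeed, if $z\in K-\widetilde E$ and $u\in\widetilde E$, I must check $z+u\in\widetilde E$. Take any $u'\in\widetilde E$; then $u+u'\in$ some translate, but more usefully: $z+\widetilde E\subseteq$ ? — this is exactly where I need that $K-\widetilde E$ acts on $\widetilde E$.

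So the real engine is the identity $(K-\widetilde E)+\widetilde E\subseteq K$. This is immediate from the definition of $K-\widetilde E$. Combined with the fact, just established, that $K-\widetilde E$ is a numerical semigroup contained in $M-M$, I want to upgrade $(K-\widetilde E)+\widetilde E\subseteq K$ to $(K-\widetilde E)+\widetilde E\subseteq\widetilde E$. For this it suffices to rule out that any $z+u$ (with $z\in K-\widetilde E$, $u\in\widetilde E$) lands in $K\setminus\widetilde E$. But if $z+u\in K\setminus\widetilde E$ then, writing $z+u=k$, we would have $z\in K-\widetilde E$ still, and iterating $nz+u\in K$ for all $n$; since $z\in M-M$ and $f(\widetilde E)=f$, choosing $n$ large forces $nz+u>f$ hence $nz+u\in\widetilde E$, and then walking back down using $z\in S-M\cup\{0\}$... — here is where I expect the main obstacle: descending from "$nz+u\in\widetilde E$ for large $n$" to "$z+u\in\widetilde E$" is not automatic unless $z$ is a nonzero element of $S$, which need not hold since $z\in M-M$ only. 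The clean fix is to instead prove directly that $K-\widetilde E$ is a \emph{relative ideal of the numerical semigroup it equals}, i.e. use that $K-\widetilde E$ being a numerical semigroup means it is closed under addition, so $(K-\widetilde E)+(K-\widetilde E)\subseteq K-\widetilde E$; then for $z\in K-\widetilde E$ and any $w\in K-\widetilde E$ we get $z+w\in K-\widetilde E$, whence $(z+w)+\widetilde E\subseteq K$, i.e. $z+(w+\widetilde E)\subseteq K$. Since $w+\widetilde E$ ranges over a cofinite-in-$\widetilde E$ set as $w$ ranges over $K-\widetilde E\ni 0$ (note $0\in K-\widetilde E$ because it is a numerical semigroup), we conclude $z+\widetilde E\subseteq K$ and with a little more care $z+\widetilde E\subseteq\widetilde E$, giving $z\in\widetilde E-\widetilde E=E-E$.

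In summary, the steps in order: (i) translate $x\notin E$ into $f(E)-x\in K-\widetilde E$ via Lemma~\ref{1}; (ii) use Lemma~\ref{K-biggest} and the duality bullets after Lemma~\ref{1} to get $S\subseteq K-\widetilde E\subseteq M-M$; (iii) use that $K-\widetilde E$ is a numerical semigroup, hence contains $0$ and is additively closed, to bootstrap $(K-\widetilde E)+\widetilde E\subseteq\widetilde E$ from the defining inclusion $(K-\widetilde E)+\widetilde E\subseteq K$; (iv) conclude $K-\widetilde E\subseteq\widetilde E-\widetilde E=E-E$, so $f(E)-x\in E-E$. The delicate step is (iii), where the hypothesis that $K-\widetilde E$ is a genuine numerical semigroup (rather than just a relative ideal) is exactly what is needed, and this is presumably why it appears in the statement.
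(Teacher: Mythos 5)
Your reduction is fine: by Lemma \ref{1}, $x\notin E$ (i.e.\ $x-e\notin\widetilde E$) gives $f(E)-x=f-(x-e)\in K-\widetilde E$, so everything hinges on the inclusion $K-\widetilde E\subseteq\widetilde E-\widetilde E=E-E$. But that inclusion --- your step (iii) --- is exactly where the write-up has a hole. The first attempt (iterate $nz+u\in K$, then descend) you abandon yourself, correctly. The ``clean fix'' does not close it either: from $z+w\in K-\widetilde E$ you deduce $z+(w+\widetilde E)\subseteq K$, and taking $w=0$ merely returns the definitional inclusion $z+\widetilde E\subseteq K$ you started from; the phrase ``with a little more care $z+\widetilde E\subseteq\widetilde E$'' is precisely the statement that has to be proved, and no argument for it is supplied. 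Since in general $\widetilde E\subsetneq K$, knowing that all the translates $z+w+\widetilde E$ land in $K$ gives no control on landing in $\widetilde E$; nothing in your step (iii) uses more than the definition of $K-\widetilde E$ together with the (true but insufficient) fact $0\in K-\widetilde E$.

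The missing idea is one more application of Lemma \ref{1} combined with $f\notin K$ (which holds because $f-f=0\in S$). If $z\in K-\widetilde E$, $u\in\widetilde E$ and $z+u\notin\widetilde E$, then Lemma \ref{1} gives $f-(z+u)\in K-\widetilde E$; closure of $K-\widetilde E$ under addition --- this is exactly where the numerical-semigroup hypothesis enters --- yields $f-u=\bigl(f-(z+u)\bigr)+z\in K-\widetilde E$, whence $f=(f-u)+u\in K$, a contradiction. With this two-line patch your plan works, and in fact it proves the stronger inclusion $K-\widetilde E\subseteq\widetilde E-\widetilde E$ without ever using the hypothesis $K-(M-M)\subseteq\widetilde E$; this is precisely the claim the paper establishes inside the proof of Proposition \ref{type2}, by this very trick. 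The paper's own proof of Lemma \ref{3} runs differently: it recycles the argument of Lemma \ref{2} (which is where $K-(M-M)\subseteq\widetilde E$ is really used) to produce $y\in M-M$ with $f(E)-x-y\in S$, settles the case $f(E)-x-y\in M$ directly, and invokes the semigroup hypothesis on $K-\widetilde E$ only in the leftover case $f(E)-x-y=0$, closing with the same $f\in K$ contradiction.
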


\begin{proof}
As in the proof of the previous lemma, there exists $y \in M-M$,
such that $f(E)-x-y \in S$. If $f(E)-x-y \in M$, then $f(E)-x \in
M$ and, therefore, $f(E)-x+t \in E$, for every $t \in E$ (since
$E$ is an ideal of $S$).

It remains to prove the thesis in the case $f(E)-x-y =0$. Since
$f(E)=f+e$, we get $f-y=x-e \notin \widetilde E$; hence, applying
Lemma \ref{1}, it follows that $y \in K-\widetilde E$. We need to
show that, for every $t \in E$, $y+t \in E$, i.e. $y+t-e \in
\widetilde E$. Assume, by contradiction, that $y+t-e \notin
\widetilde E$; applying again Lemma \ref{1}, we obtain $f-(y+t-e)
\in K-\widetilde E$; also $y \in K-\widetilde E$, which is a n.s.,
hence $f-t+e=[f-(y+t-e)]+y\in K-\widetilde E$ and, since $t-e \in
\widetilde E$, we get $f \in K$; contradiction.
\end{proof}

%
%

\begin{thm}\label{main}
Let $E$ be an ideal of a n.s. $S$; let $\widetilde E=E-e$. Then
$S\!\Join^b\!\! E$ is almost symmetric if and only if $K-(M-M)
\subseteq \widetilde E \subseteq K$ and $K-\widetilde E$ is a
numerical semigroup.
\end{thm}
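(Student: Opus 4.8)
The plan is to use the characterization $S$ almost symmetric $\Leftrightarrow K(S)+M(S)\subseteq M(S)$ (Barucci--Fr\"oberg, \cite[Proposition 4]{BF}), applied to the numerical duplication $T=S\!\Join^b\!\!E$. So I would first compute $K(T)$ explicitly — this was essentially done just before Proposition \ref{type}: $z\in K(T)$ iff $z=2f(E)+b-a$ with either $a$ even and $a/2\notin S$, or $a$ odd and $(a-b)/2\notin E$. Splitting by parity of $z$ (note $b$ is odd, so $z$ and $a$ have opposite parity), I would get a description of the even part and the odd part of $K(T)$ in terms of $K=K(S)$ and (a shift of) $\widetilde E$. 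Concretely I expect the even elements of $K(T)$ to be $2\cdot K$ and the odd elements to be $2\cdot(\text{something like }K-(M-M)\text{ or a shift of }\widetilde E)+b$; one should be careful with the shift $e=f(E)-f$ so that everything lines up with $f(T)=2f(E)+b$. Then $M(T)=2\cdot M\cup(2\cdot E+b)$.

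Next I would unwind the inclusion $K(T)+M(T)\subseteq M(T)$ by testing it on the four combinations (even $+$ even, even $+$ odd, odd $+$ even, odd $+$ odd), translating each into a statement purely about $S$, $M$, $K$, $E$, $\widetilde E$. The even+even and odd+even cases should essentially reduce to $K+M\subseteq M$ and $\widetilde E+M\subseteq \widetilde E$-type conditions, which hold automatically (the latter because $\widetilde E$ is a relative ideal), so the real content will come from the cases where an odd element of $K(T)$ is added, forcing conditions of the form "$f(E)-x\in M-M$" and "$f(E)-x\in E-E$" for $x\notin E$ — which is exactly what Lemmas \ref{2} and \ref{3} deliver, under the hypotheses $K-(M-M)\subseteq\widetilde E$ and $K-\widetilde E$ a numerical semigroup. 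For the forward direction I would run the same computation in reverse: assuming $K(T)+M(T)\subseteq M(T)$, extract from the odd-element conditions first that $\widetilde E\subseteq K$ (this should also follow from Lemma \ref{K-biggest}, or directly), then that $K-(M-M)\subseteq\widetilde E$, and finally that $K-\widetilde E$ is closed under addition (hence a numerical semigroup, since it always contains $0$ and cofinitely many integers when $\widetilde E\subseteq K$).

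The main obstacle I anticipate is bookkeeping with the two shifts at once: the passage between $E$ and $\widetilde E=E-e$, and the passage between $T$-coordinates (centered at $f(T)=2f(E)+b$) and $S$-coordinates (centered at $f$). Getting the odd part of $K(T)$ correct — deciding whether it is $2\cdot\widetilde E+b+2e$ or $2\cdot(K-(M-M)+e)+b$ or similar — is where a sign error would propagate. I would pin this down by a sanity check against Proposition \ref{Re}(3): when $\widetilde E=K$ we have $K-(M-M)\subseteq K=\widetilde E$ trivially and $K-K=S$ is a numerical semigroup, so the theorem must specialize to "$T$ symmetric," i.e. $K(T)=T$; verifying $K(T)=T$ in that case from my formula for $K(T)$ is a good consistency test. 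The other delicate point is the forward implication's claim that $K-\widetilde E$ is a \emph{numerical} semigroup and not merely additively closed; here I would note that once $K-(M-M)\subseteq\widetilde E\subseteq K$ is established, $K-\widetilde E$ sits between $K-K=S$ and $K-(K-(M-M))=M-M$ (using the duality $K-(K-F)=F$ for relative ideals, valid since $K$ is canonical), both of which are numerical semigroups, so $K-\widetilde E$ has finite complement in $\mathbb N$ and contains $0$, and additive closure then makes it a numerical semigroup.
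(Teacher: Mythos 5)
Your plan is correct and follows the paper's own proof essentially step for step: both directions go through the criterion $K(T)+M(T)\subseteq M(T)$ with $T=S\!\Join^b\!\!E$, the parity description of membership in $K(T)$, a four-case check that reduces the converse to Lemmas~\ref{2} and~\ref{3} (plus $M+K\subseteq K-(M-M)$), and, for the forward direction, Lemma~\ref{K-biggest} followed by extracting $K-(M-M)\subseteq\widetilde E$ and then additive closure of $K-\widetilde E$, whose finite complement and $0$ come from $S\subseteq K-\widetilde E\subseteq M-M$ exactly as you say. The only slip is your tentative set-level guess for the parity decomposition of $K(T)$ --- the even part is $2\cdot(K-\widetilde E)$ and the odd part is $2\cdot(K+e)+b$, not $2\cdot K$ and a shift of $\widetilde E$ --- but this is immaterial, since the membership criterion you quote (and which the paper itself works with) is all the argument needs.
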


\begin{proof}
Let $T=S\!\Join^b\!\! E$, $M(T)$ its maximal ideal and $K(T)$ its
standard canonical ideal. Assume that $T$ is almost symmetric,
that is $K(T)+M(T) \subseteq M(T)$. By Lemma \ref{K-biggest} we
know that $\widetilde E \subseteq K$. Pick now $y \in K-(M-M)$ and
assume that $y \notin \widetilde E$ (equivalently, $y+e \notin
E$). We have $f-y=f(E)-(y+e)$; hence, remembering the description
of $K(T)$ given in Section 2, we obtain $2(f-y)=2f(E)+b-(2(y+e)+b)
\in K(T)$, since $2(y+e)+b$ is odd and $y+e \notin E$. Since $T$
is almost symmetric, it follows that $2(f-y)+2s \in M(T)$ for
every $s \in M$, that is $f-y+s \in M$ for every $s \in M$, i.e.
$f-y \in M-M$; but $y \in K-(M-M)$ and therefore $f
 \in K$; contradiction. Hence
$K-(M-M) \subseteq \widetilde E$.

It remains to show that $K-\widetilde E$ is a numerical semigroup.
Since $\widetilde E \subseteq K$ it is clear that $0 \in
K-\widetilde E$. By $K-(M-M) \subseteq \widetilde E \subseteq K$
and duality properties, we also have that $S \subseteq
K-\widetilde E \subseteq M-M$; in particular, $|\mathbb N\setminus
(K-\widetilde E)| < \infty$.

Let $y$ and $z$ be two elements of $K-\widetilde E$.
Assume that $y+z \notin K-\widetilde E$;
hence, by Lemma \ref{1}, $f-(y+z) \in \widetilde E$ or,
equivalently, $f(E)-(y+z) \in E$. It follows that $2f(E)-2(y+z)+b
\in M(T)$. Moreover, since $y \in K-\widetilde E$, applying again
Lemma \ref{1}, we get $f-y \notin \widetilde E$, that is $f(E)-y
\notin E$; thus $2f(E)+b-[2(f(E)-y)+b] =2y \in K(T)$; analogously,
$2z \in K(T)$. Now, by $K(T)+M(T) \subseteq M(T)$, it follows that
$f(T)=2f(E)+b=2y+[2z+(2f(E)-2(y+z)+b)] \in M(T)$; contradiction.
Therefore $y+z \in K-\widetilde E$ and it is a n.s.

\smallskip
Conversely, assume that $K-(M-M) \subseteq \widetilde E \subseteq
K$ and $K-\widetilde E$ is a numerical semigroup; we need to show
that $K(T)+M(T) \subseteq M(T)$. Using the descriptions of the
elements of $T$ and $K(T)$, we have to consider four cases:
\begin{description}
  \item[(i)] $2s \in M(T)$ and $2f(E)+b-a \in K(T)$, where $s \in
  M$, $a$ is even and $\frac{a}{2} \notin S$;
  \item[(ii)] $2s \in M(T)$ and $2f(E)+b-a \in K(T)$, where $s \in
  M$, $a$ is odd and $\frac{a-b}{2} \notin E$;
  \item[(iii)] $2t+b \in M(T)$ and $2f(E)+b-a \in K(T)$, where $t \in
  E$, $a$ is even and $\frac{a}{2} \notin S$;
  \item[(iv)] $2t+b \in M(T)$ and $2f(E)+b-a \in K(T)$, where $t \in
  E$, $a$ is odd and $\frac{a-b}{2} \notin E$.
\end{description}
{\bf (i)} Since \ $2s+2f(E)+b-a$ \ is odd, it belongs to $M(T)$ if
and only if \ $s+f(E)-\frac{a}{2} \in E$, \ i.e.  $s+f-\frac{a}{2}
\in \widetilde E$. Since $\frac{a}{2} \notin S$, i.e.
$f-\frac{a}{2} \in K$, we obtain $s+f-\frac{a}{2}\in M+K \subseteq
K-(M-M) \subseteq \widetilde E$.

\noindent {\bf (ii)} Since $2s+2f(E)+b-a$ is even, it belongs to
$M(T)$ if and only if $s+f(E)-\frac{a-b}{2} \in M$. Since
$\frac{a-b}{2}\notin E$, we can apply Lemma \ref{2} to obtain
$f(E)-\frac{a-b}{2} \in M-M$, that implies the thesis.

\noindent {\bf (iii)} Since $2t+b+2f(E)+b-a$ is even, it belongs
to $M(T)$ if and only if $t+b+f(E)-\frac{a}{2} \in M$. Since $t+b
\in E \setminus \{0\} \subseteq M$ and $\frac{a}{2} \notin
S\supseteq E$, the thesis follows by Lemma \ref{2}.

\noindent {\bf (iv)} Since $2t+b+2f(E)+b-a$ is odd, it belongs to
$M(T)$ if and only if $t+f(E)-\frac{a-b}{2} \in E$. Since
$\frac{a-b}{2} \notin E$, the thesis follows immediately by Lemma
\ref{3}. \end{proof}

\begin{ex}
Let $S=\{0,4,8 \rightarrow \}$; we have $K=\{0,1,2,4,5,6,8
\rightarrow \}$ and $K-(M-M)=\{4,5,6,8 \rightarrow \}$. All
possible relative ideals $\widetilde{E}$ between $K-(M-M)$ and $K$
and the corresponding relative ideals $K-\widetilde{E}$ are listed
below:
\[
\begin{array}{ll}
\widetilde{E}_1=K-(M-M)=\{4,5,6,8 \rightarrow\} \ \ \ \ \ \ \ \ & K-\widetilde{E}_1=M-M=\{0,4 \rightarrow\} \\
\widetilde{E}_2=\{2,4,5,6,8 \rightarrow\} \ \ \ \ \ \ \ \ \ \ \ \ & K-\widetilde{E}_2=\{0,4,6,7,8 \rightarrow\} \\
\widetilde{E}_3=\{1,4,5,6,8 \rightarrow\} \ \ \ \ \ \ \ \ \ \ \ \ & K-\widetilde{E}_3=\{0,4,5,7,8 \rightarrow\} \\
\widetilde{E}_4=\{0,4,5,6,8 \rightarrow\} \ \ \ \ \ \ \ \ \ \ \ \ & K-\widetilde{E}_4=\{0,4,5,6,8 \rightarrow\} \\
\widetilde{E}_5=\{1,2,4,5,6,8 \rightarrow\} \ \ \ \ \ \ \ \ \ \ \ \ & K-\widetilde{E}_5=\{0,4,7,8 \rightarrow\} \\
\widetilde{E}_6=\{0,2,4,5,6,8 \rightarrow\} \ \ \ \ \ \ \ \ \ \ \ \ & K-\widetilde{E}_6=\{0,4,6,8 \rightarrow\} \\
\widetilde{E}_7=\{0,1,4,5,6,8 \rightarrow\} \ \ \ \ \ \ \ \ \ \ \ \ & K-\widetilde{E}_7=\{0,4,5,8 \rightarrow\} \\
\widetilde{E}_8=K=\{0,1,2,4,5,6,8 \rightarrow\} \ \ \ \ \ \ \ \ \
\ \ \ & K-\widetilde{E}_8=S=\{0,4,8 \rightarrow\}.
\end{array}
\]
It is straightforward to check that $K-\widetilde{E}_i$ is a n.s.
for all $i=1, \dots,8$, so $S\!\Join^b\!\!E$ (with $E\subseteq S$
ideal of $S$) is almost symmetric if and only if $E \sim
\widetilde{E}_i$ for some $i=1, \dots,8$, i.e. there exists $x \in
\mathbb{N}$ such that $E=x+\widetilde{E}_i$.
\end{ex}

\begin{ex} \label{example}
In the previous example any ideal $\widetilde{E}$ between
$K-(M-M)$ and $K$ is such that $K-\widetilde{E}$ is a n.s.; in
general, this is not the case; for example, if we consider the
n.s. $S=\{0,5, \rightarrow\}$ of Example \ref{extype}, we have
$K=\{0,1,2,3,5 \rightarrow\}$ and $K-(M-M)=\{5, \rightarrow\}$; if
we consider the ideal $E=\{5,8,10\}$, we get
$\widetilde{E}=\{0,3,5 \rightarrow\}$ and
$K-\widetilde{E}=\{0,2,3,5 \rightarrow\}$, that is not a n.s.. \\
Thanks to Theorem \ref{main},
$S\!\Join^b\!\!E=\{0,10,12,14,15,16,18,20,21,22,24, \rightarrow
\}$ is not almost symmetric; in fact $4 \in K(S\!\Join^b\!\!E)$
and $15 \in M(S\!\Join^b\!\!E)$, but $19=4+15 \notin
M(S\!\Join^b\!\!E)$.
\end{ex}

It is possible to characterize those semigroups such that for any
ideal $\widetilde{E}$ between $K-(M-M)$ and $K$, $K-\widetilde{E}$
is a n.s., using the so called {\it Ap\'ery set} of $S$ with
respect to a nonzero element $n \in S$: $\ap(S,n)=\{w_0,w_1,
\dots,w_{n-1}\}$, where $w_i=\min\{s \in S: \ s \equiv i \
(\!\!\!\!\mod \ n)\}$. Consider the following partial ordering
$\leq_S$ on $\ap(S,n)$: $w_i \leq_S w_j$ if $w_j - w_i \in S$. It
is well known that $w_i \in \ap(S,n)$ is maximal with respect to
$\leq_S$ if and only if $w_i-n$ is a pseudo-Frobenius number of
$S$ (see, e.g., \cite[Proposition 2.20]{RG}).

\begin{prop}
Let $S$ be a numerical semigroup, $n$ a nonzero element of $S$ and
$\ap(S,n)=\{w_0,w_1, \dots,w_{n-1} \}$. The following conditions
are equivalent:
\begin{description}
  \item[(i)] For every relative ideal $\widetilde E$,
  such that $K-(M-M)\subseteq  \widetilde{E} \subseteq K$,
  $K-\widetilde{E}$ is a n.s..
  \item[(ii)] For every $w_i,w_j,w_k \in \ap(S,n)$, maximal with respect to
  $\leq_S$, $w_i+w_j\neq w_k + n$.
\end{description}
\end{prop}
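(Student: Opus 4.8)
The plan is to translate both conditions into statements about the set $\mathrm{PF}(S)$ of pseudo-Frobenius numbers of $S$, using the fact that $K$ is generated as a relative ideal of $S$ by the elements $f-x$ with $x\in\mathrm{PF}(S)$, and that the relative ideals $\widetilde E$ with $K-(M-M)\subseteq\widetilde E\subseteq K$ are exactly those obtained from $K$ by deleting a subset of these generators (more precisely, deleting some of the minimal generators $f-x$ of $K$ that are \emph{not} forced to lie in $\widetilde E$ by the inclusion $K-(M-M)\subseteq\widetilde E$). Since $K-\widetilde E\subseteq M-M$ always (by the duality properties and $K-(M-M)\subseteq\widetilde E\subseteq K$), and $|\mathbb N\setminus(K-\widetilde E)|<\infty$, the only thing that can fail for $K-\widetilde E$ to be a numerical semigroup is closure under addition. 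So condition (i) is equivalent to: \emph{for every admissible $\widetilde E$, the set $K-\widetilde E$ is closed under addition.}

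Next I would pin down which elements $y$ can appear in a set of the form $K-\widetilde E$. By Lemma~\ref{1}, $y\in K-\widetilde E\iff f-y\notin\widetilde E$. Writing $n\in S$ and $w_i=\min\{s\in S:s\equiv i\ (\mathrm{mod}\ n)\}$, one has $f-y\in\widetilde E$ precisely when $f-y$ lies above (in $\leq_S$) one of the surviving generators, and the maximal generators correspond to the maximal elements of $\ap(S,n)$ under $\leq_S$, i.e. to $w_k-n\in\mathrm{PF}(S)$. The key reduction is: the "dangerous" witnesses $y$ for failure of closure are those of the form $y$ with $f-y$ \emph{barely outside} $\widetilde E$, and these are controlled by the maximal Apéry elements. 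Concretely, I expect to show that $y,z\in K-\widetilde E$ but $y+z\notin K-\widetilde E$ forces, for a suitably chosen $\widetilde E$, a relation $w_i+w_j=w_k+n$ with $w_i,w_j,w_k$ all maximal in $\ap(S,n)$ — and conversely such a relation lets one build an admissible $\widetilde E$ (delete exactly the generators corresponding to $w_i,w_j$, keep the one for $w_k$) for which $K-\widetilde E$ is not closed.

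The main obstacle will be the bookkeeping in the forward direction: given a failure of (ii), i.e. a relation $w_i+w_j=w_k+n$ among maximal Apéry elements, I must \emph{exhibit} a single relative ideal $\widetilde E$ with $K-(M-M)\subseteq\widetilde E\subseteq K$ witnessing failure of (i), and check carefully that $y:=w_i-n$-type elements (shifted appropriately so that $f-y$ is exactly a deleted generator) genuinely lie in $K-\widetilde E$ while their sum does not — and also that the $\widetilde E$ I build really does contain $K-(M-M)$, which is where the maximality of $w_k$ (equivalently $w_k-n\in\mathrm{PF}(S)$, so $f-(w_k-n)$ is a genuine minimal generator of $K$, not already absorbed) is used. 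For the reverse direction the work is to show that if no such triple relation exists, then for \emph{every} admissible $\widetilde E$ and every $y,z\in K-\widetilde E$ we cannot have $f-(y+z)\in\widetilde E$: this amounts to saying that $f-(y+z)$ cannot be $\geq_S$ any surviving generator while $f-y$ and $f-z$ are each $\not\geq_S$ every surviving generator, and translating the obstruction through the Apéry coordinates reduces exactly to the non-existence of $w_i+w_j=w_k+n$. I would present this as a chain of equivalences, doing the contrapositive of (i)$\Rightarrow$(ii) first (easier: one explicit bad $\widetilde E$) and then (ii)$\Rightarrow$(i).
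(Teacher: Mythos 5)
Your plan is correct and is essentially the paper's proof: your witness ideal obtained by deleting the generators $f-(w_i-n)$ and $f-(w_j-n)$ of $K$ is exactly the paper's $\widetilde E=K-F$ with $F=S\cup\{x,y\}$ for two pseudo-Frobenius numbers $x,y$, and your reverse direction is the same observation that elements of $(K-\widetilde E)\setminus S$ are pseudo-Frobenius numbers whose sum, lying in $M-M$, must then be in $S\subseteq K-\widetilde E$. The only difference is presentational: the paper first reduces (i) to the statement \emph{the sum of any two pseudo-Frobenius numbers lies in $S$} and only then translates to the Ap\'ery condition via the maximality criterion, which makes the bookkeeping you flag as the main obstacle essentially trivial.
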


\begin{proof}
Remembering that $S \subseteq K-\widetilde{E} \subseteq M-M$, we
have that $K-\widetilde{E}$ is a n.s. if and only if it is closed
with respect to the sum.

Let $x,y$ be two nonzero elements of $K-\widetilde{E}$; if one of
them belongs $S$, the sum is still in $S$ because the other one is
in $M-M$. Hence condition (i) is satisfied if and only if, for any
choice of $\widetilde{E}$, for any $x,y \in (K-\widetilde{E})
\setminus S$ their sum belongs to $K-\widetilde{E}$. This
condition implies that, for every two elements $x,y$ of $(M-M)
\setminus S$, i.e. two pseudo-Frobenius numbers, their sum belongs
to $S$; otherwise, if we set $F=S \cup \{x,y\}$ and
$\widetilde{E}=K-F$, we have $K-(M-M) \subseteq \widetilde{E}
\subseteq K$, but $K-\widetilde{E}=F$ is not a numerical
semigroup. Conversely, if the sum of any two pseudo-Frobenius
numbers belongs to $S$, it is clear that $K-\widetilde{E}
\subseteq M-M$ is a n.s. for every $\widetilde E$.

Finally, using the characterization of maximal elements in the
Ap\'ery set recalled above, we obtain the equivalence with
condition (ii).
\end{proof}

\medskip
In case $S\!\Join^b\!\! E$ is almost symmetric it is possible to
specialize Proposition \ref{type}, obtaining better formulas for
its type. Let us start with lemma.

\begin{lem} Let $E$ be an ideal of a n.s. $S$ and let $\widetilde E=E-e$.
Assume that $K-(M-M) \subseteq \widetilde E$ (i.e. $K-\widetilde E
\subseteq M-M$). Then the map $\varphi: y \mapsto f-y$ induces a
bijection between $(K-\widetilde E)\setminus S$ and $(\widetilde
E-M)\setminus (\widetilde E \cup \{f\})$.

In particular, $|(K - \widetilde E)\setminus S| =|(\widetilde
E-M)\setminus \widetilde E|-1$.
\end{lem}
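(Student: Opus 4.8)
The plan is to observe that, by Lemma~\ref{1}, $K-\widetilde E=\{x\in\mathbb Z:\ f-x\notin\widetilde E\}$, so the prescribed map $\varphi\colon y\mapsto f-y$ translates the condition $y\in K-\widetilde E$ into $f-y\notin\widetilde E$; thus $\varphi$ is the natural candidate, and the work splits into checking that it is well defined into the stated target and that it is surjective. Before starting I would record two elementary facts: by Lemma~\ref{K-biggest} we always have $\widetilde E\subseteq K$, and since $0\in S$ forces $f\notin K$ this gives $f\notin\widetilde E$; and, as already noted in the statement, the hypothesis $K-(M-M)\subseteq\widetilde E$ is equivalent (via the duality properties and $K-(K-E)=E$) to $K-\widetilde E\subseteq M-M$.

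For well-definedness I would take $y\in(K-\widetilde E)\setminus S$. Lemma~\ref{1} gives $f-y\notin\widetilde E$, and $y\notin S$ gives $y\neq 0$, hence $f-y\neq f$. To see that $f-y\in\widetilde E-M$ I would argue by contradiction: if $f-y+s\notin\widetilde E$ for some $s\in M$, then, writing $f-(y-s)=f-y+s$ and applying Lemma~\ref{1} once more, we get $y-s\in K-\widetilde E\subseteq M-M$, whence $y=(y-s)+s\in M\subseteq S$, contradicting $y\notin S$. Therefore $\varphi$ sends $(K-\widetilde E)\setminus S$ into $(\widetilde E-M)\setminus(\widetilde E\cup\{f\})$, and it is injective since $y\mapsto f-y$ is a bijection of $\mathbb Z$.

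For surjectivity I would take $z\in(\widetilde E-M)\setminus(\widetilde E\cup\{f\})$ and set $y=f-z$; then $f-y=z\notin\widetilde E$, so $y\in K-\widetilde E$ by Lemma~\ref{1}. If $y=f-z$ belonged to $S$, then $z\neq f$ would force $f-z\in M$, and $z\in\widetilde E-M$ would give $f=z+(f-z)\in\widetilde E$, contradicting $f\notin\widetilde E$; hence $y\in(K-\widetilde E)\setminus S$ and $\varphi(y)=z$. For the numerical consequence I would observe that $f\in\widetilde E-M$ (every integer exceeding $f=f(\widetilde E)$ lies in $\widetilde E$, and each $s\in M$ is positive) while $f\notin\widetilde E$, so $f\in(\widetilde E-M)\setminus\widetilde E$; removing this single element from the target of the bijection yields $|(K-\widetilde E)\setminus S|=|(\widetilde E-M)\setminus(\widetilde E\cup\{f\})|=|(\widetilde E-M)\setminus\widetilde E|-1$.

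I do not expect a deep obstacle; the step that needs most care is the well-definedness argument, where the hypothesis $K-(M-M)\subseteq\widetilde E$ --- in the form $K-\widetilde E\subseteq M-M$ --- is exactly what converts $y-s\in K-\widetilde E$ into $y\in S$. Everything else is routine bookkeeping with Lemma~\ref{1} together with the facts $\widetilde E\subseteq K$ and $f\notin\widetilde E$.
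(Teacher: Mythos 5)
Your proof is correct and follows essentially the same route as the paper: Lemma~\ref{1} handles the identification $y\in K-\widetilde E\Leftrightarrow f-y\notin\widetilde E$, the hypothesis in the form $K-\widetilde E\subseteq M-M$ drives the well-definedness contradiction ($y\in M\subseteq S$), and surjectivity uses $f\notin\widetilde E$ exactly as in the paper. Your only deviations are cosmetic improvements: you apply Lemma~\ref{1} directly to $f-(y-s)$ instead of the paper's detour through $\widetilde E=K-(K-\widetilde E)$, and you make explicit the observation $f\in(\widetilde E-M)\setminus\widetilde E$ that justifies the final cardinality formula, which the paper leaves implicit.
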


\begin{proof}
We first prove that $\varphi:(K-\widetilde E)\setminus S
\longrightarrow (\widetilde E-M)\setminus  (\widetilde E \cup
\{f\})$ is well defined. Let $y \in (K-\widetilde E)\setminus S$;
by Lemma \ref{1}, $f-y \notin \widetilde E$; moreover, $y \notin
S$, hence $y \neq 0$ and so $f-y \neq f$. It remains to show that
$f-y \in \widetilde E-M$. Assume that there exists $s \in M$ such
that $f-y+s \notin \widetilde E=K-(K-\widetilde E)$; thus there
exists $z\in K-\widetilde E$, such that $f-y+s+z\notin K$, i.e.
$y-s-z \in S$; since $z\in K-\widetilde E\subseteq M-M$, we get
$s+z \in M$ and therefore $y=(y-s-z)+(s+z) \in S+M\subseteq S$;
contradiction against the choice of $y$.

Since $\varphi$ is clearly injective, we need only to show that it
is surjective: for any $z \in (\widetilde E-M)\setminus
(\widetilde E \cup \{f\})$, $f-z \in (K-\widetilde E)\setminus
S$. Assume that $f-z \in S$; since $z \neq f$, $f-z \in M$; hence
$f=z+(f-z) \in (\widetilde E-M)+M\subseteq \widetilde E$, a
contradiction. Finally, since $z \notin \widetilde E$, by Lemma
\ref{1} $f-z \in K-\widetilde E$.
\end{proof}

\begin{prop}\label{type2}
Let $E$ be an ideal of a n.s. $S$ and let $\widetilde E=E-e$.
Assume that $S\!\Join^b\!\! E$ is almost symmetric. Then the type
of the numerical duplication is
$$t(S\!\Join^b\!\! E) = 2|(E-M)\setminus
E|-1=2|(K - \widetilde E)\setminus S|+1=2|K \setminus \widetilde{E}|+1
$$
In particular, $t(S\!\Join^b\!\! E)$ is an odd integer, $1 \leq
t(S\!\Join^b\!\! E)\leq 2t(S)+1$ and it does not depend on $b$.
\end{prop}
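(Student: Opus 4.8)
The plan is to begin from Proposition~\ref{type}, which gives $t(S\!\Join^b\!\!E)=|((M-M)\cap(E-E))\setminus S|+|(E-M)\setminus E|$, and to show that, under the almost symmetry hypothesis, the first summand equals the second minus one; all three displayed forms of the type then follow by routine cardinality manipulations together with the duality properties of Section~1.

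The crux is the set identity $(M-M)\cap(E-E)=K-\widetilde E$. One inclusion needs no hypothesis: if $z\in E-E$, then $f(E)-z\notin E$, since otherwise $z+(f(E)-z)=f(E)\in E$, which is absurd; translating via $\widetilde E=E-e$ and $f(E)=f+e$ and applying Lemma~\ref{1} to the relative ideal $\widetilde E$, this says $E-E\subseteq K-\widetilde E$. For the reverse inclusion I would exploit that Theorem~\ref{main} makes both Lemma~\ref{2} and Lemma~\ref{3} available (almost symmetry gives $K-(M-M)\subseteq\widetilde E$ and that $K-\widetilde E$ is a numerical semigroup): for $z\in K-\widetilde E$, Lemma~\ref{1} gives $f(E)-z\notin E$, and then Lemmas~\ref{2} and~\ref{3} applied to $x=f(E)-z$ yield $z=f(E)-x\in M-M$ and $z\in E-E$, so $z\in(M-M)\cap(E-E)$. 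Chaining $K-\widetilde E\subseteq(M-M)\cap(E-E)\subseteq E-E\subseteq K-\widetilde E$ forces the three sets to coincide.

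From here the computation is short. First, $|((M-M)\cap(E-E))\setminus S|=|(K-\widetilde E)\setminus S|$, which by the lemma immediately preceding this proposition equals $|(\widetilde E-M)\setminus\widetilde E|-1$; and since $\widetilde E=E-e$ and $\widetilde E-M=(E-M)-e$ are simultaneous translates, this is $|(E-M)\setminus E|-1$. Substituting into Proposition~\ref{type} yields $t(S\!\Join^b\!\!E)=2|(E-M)\setminus E|-1=2|(K-\widetilde E)\setminus S|+1$. For the last form, the duality property ``$E\subseteq F\Rightarrow|F\setminus E|=|(K-E)\setminus(K-F)|$'' applied with $F=K$, together with $K-K=S$, gives $|K\setminus\widetilde E|=|(K-\widetilde E)\setminus S|$. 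As for the ``in particular'' part: oddness is manifest from the formula, $t\ge1$ holds because $\widetilde E\subseteq K$ (so $|K\setminus\widetilde E|\ge0$), and independence of $b$ is evident (and already follows from Proposition~\ref{type}); for $t\le2t(S)+1$ I would bound $|K\setminus\widetilde E|\le|K\setminus(K-(M-M))|$ using $K-(M-M)\subseteq\widetilde E$, and evaluate the right-hand side as $|(M-M)\setminus S|=t(S)$ via the same duality property together with $K-(K-(M-M))=M-M$.

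I expect the only delicate step to be the set equality $(M-M)\cap(E-E)=K-\widetilde E$ in the second paragraph; but even there the two substantive inclusions are exactly Lemmas~\ref{2} and~\ref{3}, so the actual difficulty is bookkeeping — keeping the translation by $e$ straight, i.e.\ the dictionary between ``$x\notin E$'' and ``$f(E)-x\notin E$'' and between $E$ and $\widetilde E$ — rather than anything conceptually new.
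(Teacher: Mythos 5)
Your proof is correct, and its overall architecture is the paper's: start from Proposition~\ref{type}, identify the first summand with $|(K-\widetilde E)\setminus S|$, convert it via the bijection lemma preceding the proposition into $|(E-M)\setminus E|-1$, get the last expression by duality, and bound $|(K-\widetilde E)\setminus S|\leq t(S)$ from $S\subseteq K-\widetilde E\subseteq M-M$. The one point where you diverge is the key identification: the paper first notes $E-E=\widetilde E-\widetilde E\subseteq K-\widetilde E\subseteq M-M$ (so the intersection in Proposition~\ref{type} is just $\widetilde E-\widetilde E$) and then proves the claim $K-\widetilde E=\widetilde E-\widetilde E$ by a short self-contained argument using only Lemma~\ref{1} and the fact that $K-\widetilde E$ is a semigroup; you instead prove the equivalent identity $(M-M)\cap(E-E)=K-\widetilde E$ by sandwiching, with the hard inclusion $K-\widetilde E\subseteq (M-M)\cap(E-E)$ obtained by applying Lemmas~\ref{2} and~\ref{3} to $x=f(E)-z$. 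Your route is legitimate (those lemmas are proved before and independently of Theorem~\ref{main}, so there is no circularity) and is arguably tidier, since it recycles existing lemmas rather than reproving the claim; the paper's version keeps the proof self-contained at that point. All the bookkeeping you worried about --- the translation by $e$, the role of $f\in(\widetilde E-M)\setminus\widetilde E$ in the ``$-1$'', and the duality computations $|K\setminus\widetilde E|=|(K-\widetilde E)\setminus S|$ and $|K\setminus(K-(M-M))|=t(S)$ --- checks out.
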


\begin{proof} By the main theorem, we have
$K-(M-M) \subseteq \widetilde E \subseteq K$ and $K-\widetilde E$
is a numerical semigroup. Hence, $E-E=\widetilde E-\widetilde
E\subseteq K-\widetilde E \subseteq M-M$, so the formula in
Proposition \ref{type} becomes $$t(S\!\Join^b\!\! E)=|(\widetilde
E-\widetilde E)\setminus S|+|(E-M)\setminus E|.
$$
We claim that under our hypotheses, $K-\widetilde E= \widetilde
E-\widetilde E$. Assume that there exists $y \in (K-\widetilde E)
\setminus (\widetilde E-\widetilde E)$; hence there exists $t-e
\in \widetilde E$, with $t \in E$, such that $y+t-e \in K \setminus \widetilde E$;
hence, by Lemma \ref{1}, $f-y-t+e \in K - \widetilde E$. Since $y
\in K-\widetilde E$ which is a n.s., we get $f-t+e \in K-
\widetilde E$, that implies $f=f-t+e+(t-e) \in K$; contradiction.

Hence the claim is proved and $$t(S\!\Join^b\!\! E)=|(K-\widetilde
E)\setminus S|+|(E-M)\setminus E|.
$$
Since $(E-M)\setminus E=[(\widetilde E-M)\setminus \widetilde
E]+e$, the first two equalities of the formula in the statement follow immediately by the
previous lemma. The last equality is a direct consequence of duality properties.

As for the second part of the statement, we need
only to show the inequality $t(S\!\Join^b\!\! E)\leq 2t(S)+1$. We
have that $t(S)=|(M-M)\setminus S|$; since $S \subseteq
K-\widetilde E \subseteq M-M$, we have $|(K-\widetilde E)\setminus
S|\leq t(S)$, as desired.
\end{proof}

Notice that, if $S\!\Join^b\!\!E$ is not almost symmetric, its
type may be even. For instance the numerical duplication presented
in Examples \ref{extype} and \ref{example} has type $4$.

\medskip
Notice that it is possible to obtain any odd integer $x=2m+1$ in
the range prescribed by the previous proposition. In fact, if
$\widetilde E=K$, then $|(K-\widetilde E)\setminus S|=0$ and
$t(S\!\Join^b\!\! E)=1$, as we knew by Proposition \ref{Re}. On
the other hand, if $\widetilde E=K-(M-M)$, then $K-\widetilde
E=M-M$ and $t(S\!\Join^b\!\! E)=2t(S)+1$. So the bounds obtained
in the proposition are sharp. To see that any odd integer in the
prescribed range can be achieved, we can argue as follows: let $F$
be the relative ideal obtained adding to $S$ the $m$ biggest
elements of $(M-M)\setminus S$ and consider $\widetilde{E}=K-F$.
It is straightforward to check that $F$ is a n.s., so if we
consider the ideal $E=\widetilde{E}+z$, for $z\in \mathbb N$ such
that $E\subseteq S$, we have that $S\!\Join^b\!\!E$ is almost
symmetric (since $K-\widetilde{E}=K-(K-F)=F$ is a n.s.); moreover
we have $t(S\!\Join^b\!\!E)=2|(K-\widetilde{E})\setminus
S|+1=2|F\setminus S|+1=2m+1$. Hence we have proved the following
corollary:

\begin{cor}
Let $S$ be a n.s. and let $x$ be any odd integer, $1\leq x\leq
2t(S)+1$; then, for every odd $b\in S$, there exist infinitely many ideals $E\subseteq S$ such that
$S\!\Join^b\!\! E$ is almost symmetric and $t(S\!\Join^b\!\!
E)=x$.
\end{cor}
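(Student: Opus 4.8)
The plan is to make explicit the construction sketched in the paragraph just before the statement, and then to verify its three requirements — almost symmetry, the prescribed type $x$, and infinitely many realizing ideals — using Theorem~\ref{main} and Proposition~\ref{type2}. First I would write $x=2m+1$, so that $1\le x\le 2t(S)+1$ becomes $0\le m\le t(S)$. I list the pseudo-Frobenius numbers of $S$, that is the elements of $(M-M)\setminus S$, as $p_1<p_2<\dots<p_{t(S)}=f$, and I set $F:=S\cup\{p_{t(S)-m+1},\dots,p_{t(S)}\}$, the union of $S$ with the $m$ largest of them, and $\widetilde E:=K-F$. Since $S\subseteq F\subseteq M-M$ and $F\subseteq\mathbb N$, the duality properties of Section~1 (together with $K-S=K$, $C(S)\subseteq K-(M-M)$, and $K-(K-F)=F$) give $C(S)\subseteq K-(M-M)\subseteq\widetilde E\subseteq K$ and $K-\widetilde E=F$; in particular $f(\widetilde E)=f$, so $\widetilde E$ is the normalized representative of its equivalence class.

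The one point that genuinely needs an argument — and the step I expect to be the main, though short, obstacle — is that $F$ is a numerical semigroup. Since $0\in F$ and $\mathbb N\setminus F\subseteq\mathbb N\setminus S$ is finite, it suffices to check closure under addition. Let $a,b\in F$; both lie in the semigroup $M-M$, so $a+b\in M-M$, hence either $a+b\in S$ or $a+b$ is a pseudo-Frobenius number. In the latter case $a$ and $b$ cannot both lie in $S$ (their sum would then lie in $S$), so at least one of them, say $b$, equals $p_j$ with $j\ge t(S)-m+1$; then $a+b\ge b=p_j\ge p_{t(S)-m+1}$, which forces $a+b\in\{p_{t(S)-m+1},\dots,p_{t(S)}\}\subseteq F$. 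Thus $F=K-\widetilde E$ is a numerical semigroup, and since also $K-(M-M)\subseteq\widetilde E\subseteq K$, Theorem~\ref{main} yields that $S\!\Join^b\!\!E$ is almost symmetric for every ideal $E\subseteq S$ with $E\sim\widetilde E$ and every odd $b\in S$. Proposition~\ref{type2} then computes the type: $t(S\!\Join^b\!\!E)=2|(K-\widetilde E)\setminus S|+1=2|F\setminus S|+1=2m+1=x$.

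To finish, I would produce infinitely many such ideals by shifting: for each integer $z$ with $m(\widetilde E)+z\ge f+1$ put $E_z:=\widetilde E+z$; then $E_z\subseteq C(S)\subseteq S$ is an ideal of $S$ equivalent to $\widetilde E$, the $E_z$ are pairwise distinct, and the duplications $S\!\Join^b\!\!E_z$ have pairwise distinct Frobenius numbers $2f(E_z)+b=2(f+z)+b$ by Proposition~\ref{Re}(1). By the previous paragraph each $S\!\Join^b\!\!E_z$ is almost symmetric of type $x$ for every fixed odd $b\in S$, which is exactly the assertion. The extreme values are covered as well: $m=0$ gives $\widetilde E=K$ and type $1$, recovering Proposition~\ref{Re}(3), while $m=t(S)$ gives $F=M-M$, $\widetilde E=K-(M-M)$ and type $2t(S)+1$; so the whole odd range $\{1,3,\dots,2t(S)+1\}$ is realized.
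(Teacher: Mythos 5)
Your proof is correct and follows essentially the same route as the paper: the paragraph preceding the corollary constructs exactly this $F$ (adjoining the $m$ largest pseudo-Frobenius numbers), sets $\widetilde E=K-F$, and invokes Theorem~\ref{main} and Proposition~\ref{type2}, with the infinitude obtained by shifting. You merely spell out the closure check for $F$ (which the paper calls straightforward) and the explicit family of shifts $E_z$, both of which are fine.
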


As a byproduct of the previous results we obtain the following

\begin{cor}
Let $S$ be a n.s. and let $x$ be any odd integer, $1\leq x\leq
2t(S)+1$; then $S$ is one half of infinitely many almost symmetric
numerical semigroups $T$, with $t(T)=x$.
\end{cor}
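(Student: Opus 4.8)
The plan is to deduce this final corollary almost immediately from the previous corollary together with the defining relation $S=(S\!\Join^b\!\!E)/2$ and the characterizations proved in Section 2. First I would fix an odd integer $x$ with $1 \leq x \leq 2t(S)+1$ and an odd $b \in S$ (for instance $b$ can be taken to be any odd element of $S$, and there are infinitely many such). By the previous corollary there exist infinitely many ideals $E \subseteq S$ such that $T:=S\!\Join^b\!\!E$ is almost symmetric with $t(T)=x$.

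The next step is to observe that each such $T$ satisfies $T/2 = S$. This is built into the construction: by definition $T = 2\cdot S \cup (2\cdot E + b)$, so an integer $y$ satisfies $2y \in T$ if and only if either $2y \in 2\cdot S$, i.e. $y \in S$, or $2y \in 2\cdot E + b$; but the latter is impossible since $2y$ is even and $2\cdot E + b$ consists of odd integers ($b$ being odd). Hence $T/2 = \{y \in \mathbb N : 2y \in T\} = S$, so $S$ is one half of $T$.

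Finally I would argue that the family $\{S\!\Join^b\!\!E\}$ obtained this way is genuinely infinite, i.e. gives infinitely many distinct semigroups $T$. This follows because the ideals $E$ produced in the proof of the previous corollary (namely $E = \widetilde E + z$ for varying $z \in \mathbb N$ with $E \subseteq S$) have $f(E) = f + z$ unbounded as $z$ grows, and by Proposition \ref{Re}(1) we get $f(T) = 2f(E)+b = 2(f+z)+b$, which takes infinitely many values; hence the $T$'s are pairwise distinct. Collecting these observations: for every odd $x$ in the stated range, $S$ is one half of infinitely many almost symmetric numerical semigroups $T$ with $t(T) = t(S\!\Join^b\!\!E) = x$, which is exactly the assertion.

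I do not expect a real obstacle here; the statement is essentially a repackaging of the previous corollary. The only point requiring a line of care is the verification that distinct parameter choices yield distinct semigroups (so that "infinitely many" is justified rather than merely "at least one"), and this is handled by the Frobenius number computation in Proposition \ref{Re}(1) as indicated above.
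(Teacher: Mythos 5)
Your proposal is correct and follows essentially the same route as the paper, which simply deduces the statement from the previous corollary together with the relation $S=(S\!\Join^b\!\!E)/2$. You merely make explicit two details the paper leaves implicit (that $2\cdot E+b$ consists of odd integers, so $T/2=S$, and that distinct ideals $E$ yield distinct semigroups $T$ via the Frobenius number formula), and both verifications are sound.
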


\begin{proof}
The thesis is a direct consequence of the fact that $S$ is one
half of $S\!\Join^b\!\! E$ and of the previous corollary.
\end{proof}

\noindent \textbf{Acknowledgments.} The authors thank Fernando
Torres for pointing out the connection between numerical
duplication and Weierstrass semigroup theory.

\end{document}